\documentclass[a4paper]{amsart}

\usepackage{todonotes}
\usepackage{mathtools}
\usepackage{float}
\usepackage{subcaption}
\usepackage{graphicx}
\graphicspath{{figures/}}

\newtheorem{theorem}{Theorem}
\newtheorem{lemma}[theorem]{Lemma}
\newtheorem{remark}{Remark}
\newtheorem{example}{Example}
\newtheorem{assumption}{Assumption}
\newtheorem{definition}{Definition}

\begin{document}
\title[Pressure robust mixed methods for nearly incompressible elasticity]
{Pressure robust mixed methods for nearly incompressible elasticity}
\author{Seshadri R. Basava}
\address{Seshadri R. Basava, Department of Mathematics, Technische Universit\"at Darmstadt, Dolivostr. 15, 64293 Darmstadt, Germany}
\email{basava@mathematik.tu-darmstadt.de}
\author{Winnifried Wollner}
\address{Winnifried Wollner, Fachbereich Mathematik, MIN~Fakult\"at, Universit\"at Hamburg, Bundesstr. 55, 20146 Hamburg, Germany}
\email{winnifried.wollner@uni-hamburg.de}
\thanks{The authors thank Luca Heltai for helpful discussions on the
  implementation of the pressure robust interpolation in deal.ii.
  This work was funded by the Deutsche Forschungsgemeinschaft (DFG, German Research
Foundation) -- Projektnummer 392587580 -- SPP 1748}
\date{\today}
\begin{abstract}
  Within the last years pressure robust methods for the discretization of incompressible
  fluids have been developed. These methods allow the use of standard
  finite elements for the solution of the problem while simultaneously
  removing a spurious pressure influence in the approximation error of
  the velocity of the fluid, or the displacement of an incompressible solid.
  To this end, reconstruction operators are
  utilized mapping discretely divergence free functions to divergence
  free functions. This work shows that the modifications proposed for Stokes
  equation by \cite{Linke:2014} also yield gradient robust methods for nearly 
  incompressible elastic materials without the need to resort to discontinuous 
  finite elements methods as proposed in \cite{FuGuoshengLehrenfeldChristophLinkeStrechenbach:2020}.
\end{abstract}
\subjclass[2020]{primary: 65N30, 65N15, secondary: 74B05, 74F05}
\keywords{gradient robustness, linear elasticity, nearly
  incompressible, mixed finite elements}
\maketitle

\section{Introduction}
The Stokes equation for steady flow of an incompressible fluid is given as
\begin{equation}
  \begin{aligned}
    -\nu \Delta \mathbf{u} - \nabla p  & = \mathbf{f} & &\text{in }\Omega, \\
    \nabla \cdot \mathbf{u} & = 0 & &\text{in }\Omega, \\
    u &= 0 && \text{on }\Omega,
  \end{aligned}
\end{equation}
in a, polygonal, domain $\Omega \subset \mathbb{R}^d; d = 2,3$ for given data
$\mathbf{f} \in L^2(\Omega)$ and $\nu > 0$, where $\mathbf{u}$ denotes the fluid velocity and $p$ denotes 
the pressure. Under the famous inf-sup condition for the finite element spaces $\mathbf{V}_h$ and
$Q_h$, the use of mixed finite elements allows to obtain discrete
approximations $\mathbf{u}_h \in \mathbf{V}_h$ and $p_h \in Q_h$
satisfying an an error estimate of the form
\[
\|\mathbf{u}-\mathbf{u}_h\|_1 \le \frac{c}{\beta} \inf_{\mathbf{v}_h \in
  \mathbf{V}_h} \|\mathbf{u}-\mathbf{v}_h\|_1 + \frac{c}{\nu}\inf_{q_h \in Q_h}\|p-q_h\|_0,
\]
see, e.g.,~\cite{GiraultRaviart:1986}
Here $\beta$ is the inf-sup constant associated to the choice of
$\mathbf{V}_h$ and $Q_h$, $\|\,\cdot\,\|_1$ and $\|\,\cdot\,\|_0$ denote the
$H^1$ and $L^2$ norm on $\Omega$, respectively.
Further, here and throughout the paper $c$ denotes a generic constant
which is independent of all relevant quantities of the estimate but
may take a different value at each appearance.

While the estimate yields asymptotically
optimal orders without the need to utilize exactly divergence free
finite element functions for the approximation of $\mathbf{u}_h$ 
the right hand side of the estimate hints towards an undesirable
influence of the pressure on the approximation error of the
velocity. In fact, it has been observed, e.g., in~\cite{Linke:2014}
that indeed complicated pressures can give rise to a large error in
the velocity approximation, even in situations where the true
velocity can be represented in the discrete space $\mathbf{V}_h$.

A potential remedy, allowing for arbitrary inf-sup stable element pairs while providing
pressure independent velocity has been proposed by~\cite{Linke:2014}. He proposed the use of 
reconstruction operators on the right hand side of the equation to map discretely divergence free
functions to divergence free functions. This proposed method has been implemented to a range of 
problems and a variety of finite element pairs for the discretization of Stokes equation, such as
non-conforming Crouzeix-Raviart element~\cite{LinkeMerdonWollner:2017} , Taylor-Hood and MINI
elements with continuous pressure spaces~\cite{LedererLinkeMerdonSchoeberl:2017}, on rectangular
elements~\cite{LinkeMatthiesTobiska:2016}, for embedded discontinuous
Galerkin methods (EDG)~\cite{LedererRhebergen:2020}. For $3$-d polyhedral domains with concave edges a pressure robust 
reconstruction is given in~\cite{ApelKempf:2021}.
While the obtained convergence orders are optimal,
the price to pay, for these methods is a loss of quasi optimality of the method due to Strang's 
first lemma. Recently,~\cite{KreuzerVerfuerthZanotti:2020} showed that a more
involved construction of the reconstruction operator allows for a quasi-optimal discretization.

In this paper, we consider the extension of these results to nearly incompressible linear elasticity, e.g.,
\[
\begin{aligned}
  -2\mu \nabla \cdot \varepsilon(\mathbf{u}) - \lambda \nabla
  (\nabla \cdot \mathbf{u}) & = \mathbf{f} && \text{in }\Omega,\\
  u & = 0 && \text{on }\partial\Omega,
\end{aligned}
\]
where $\varepsilon(\mathbf{u})$ denotes the symmetric gradient, and
$\mu, \lambda > 0$ are the Lam\'e parameters.
To avoid the locking phenomenon, e.g.,~\cite[Chapter~VI.3]{Braess:2007},
typically a mixed form 
\begin{equation}\label{eq:elasticity_mixed}
  \begin{aligned}
    -2\mu \, \nabla \cdot \varepsilon(\mathbf{u}) - \nabla p & = \mathbf{f} &&  \text{ in } \Omega, \\
    \nabla \cdot \mathbf{u} - \frac{1}{\lambda} p & = 0 &&  \text{ in } \Omega, \\
    \mathbf{u} & = 0 &&  \text{ on } \partial \Omega,
  \end{aligned}
\end{equation}
is considered. Here the incompressible case, i.e., $\lambda=\infty$,
can easily be included by dropping the term $-\frac{1}{\lambda}p$ in
the second line. It is clear conceptually that the same difficulties
as for the Stokes problem will occur in the incompressible limit.
However, the treatment of the nearly incompressible case
requires additional care. To this
end,~\cite{FuGuoshengLehrenfeldChristophLinkeStrechenbach:2020}
defined a discretization to be ``gradient robust'', if the influence of
gradient forces $\mathbf{f} = \nabla \phi$ in the
discrete solution vanishes sufficiently fast as $\lambda \rightarrow
\infty$. \cite{FuGuoshengLehrenfeldChristophLinkeStrechenbach:2020}~showed
that a standard mixed discretization of~\eqref{eq:elasticity_mixed}
is not gradient robust and provided a gradient robust hybrid discontinuous Galerkin (HDG) scheme.
Within this article, we will show that mixed methods can be made
gradient robust using the approach proposed by~\cite{Linke:2014} for
the mixed discretization of~\eqref{eq:elasticity_mixed}.

The rest of the paper is structured as follows. In
Section~\ref{sec:discretization}, we introduce the notion of gradient
robustness and discuss the discretization
of~\eqref{eq:elasticity_mixed}. Next, in
Section~\ref{sec:erroranalysis}, we show that the proposed
discretization is indeed gradient robust and provide error estimates.
We conclude the paper with a series of examples highlighting the
derived results in Section~\ref{sec:numerics}.

\section{Gradient Robustness and Discretization}\label{sec:discretization}

\subsection{Gradient Robustness}
We define the spaces $\mathbf{V}^0$ of divergence free function and
its orthogonal complement $\mathbf{V}^\bot$ as
\begin{align*}
  \mathbf{V}^{0} &= \left\{ \mathbf{u} \in H^1_0(\Omega;\mathbb{R}^d) : \nabla \cdot \mathbf{u} = 0\right\}, \label{eq:vzero}\\
  \mathbf{V}^\bot &= \left\{ \mathbf{u} \in H^1_0(\Omega;\mathbb{R}^d) : a(\mathbf{u}, \mathbf{v}) = 0 , \forall \, \mathbf{v} \in \mathbf{V}^{0} \right\},
\end{align*}
where for $\mathbf{u}, \mathbf{v}\in \mathbf{V} = H^1_0(\Omega;\mathbb{R}^d)$, we define the bilinear form (scalar product)
$a \colon \mathbf{V} \times \mathbf{V} \rightarrow \mathbb{R}$ by 
\begin{equation}\label{eq:bilinear_a}
  a(\mathbf{u}, \mathbf{v}) = 2\mu (\varepsilon(\mathbf{u}),
  \varepsilon(\mathbf{v})),
\end{equation}
with the $L^2(\Omega)$-scalar product $(\,\cdot\,,\,\cdot\,)$.
Now, any function $\mathbf{u} \in \mathbf{V}$ can be uniquely written as 
$\mathbf{u} = \mathbf{u}^0 + \mathbf{u}^\bot \in \mathbf{V}^0 \oplus \mathbf{V}^\bot$.

Using Helmholtz decomposition, $\mathbf{f} \in L^2(\Omega)$ can be uniquely decomposed as
\begin{equation}
  \mathbf{f} = \nabla \phi + \mathbf{w},
\end{equation}
where $\nabla \phi \in H^1(\Omega)/ \mathbb{R}$ is irrotational, $\mathbf{w}$ is divergence free and
both are orthogonal with respect to the $L^2(\Omega)$-scalar product, i.e., 
\begin{equation}
  (\mathbf{w},\nabla \, \phi) = 0. 
\end{equation}
With these definitions, the decay of the influence of gradient forces,
i.e., $\mathbf{w} = 0$, onto the solutions $\mathbf{u}$ of~\eqref{eq:elasticity_mixed} can be
quantified as the following result
from~\cite[Theorem~1]{FuGuoshengLehrenfeldChristophLinkeStrechenbach:2020}
shows:
\begin{lemma}\label{lem:u0}
  If $\mathbf{f} \in H^{-1}(\Omega)$ is a gradient, i.e., $\mathbf{f} = \nabla \phi,$ for some $ \phi \in L^2(\Omega)$. Then 
  for the solution $\mathbf{u} = \mathbf{u}^0 + \mathbf{u}^\bot$
  of~\eqref{eq:elasticity_mixed} it holds $\mathbf{u}^0 = 0$ and
  \[
  \|\mathbf{u}\|_1 = \|\mathbf{u}^\bot\|_1 \le \frac{c}{\mu+
    \lambda}\|\phi\|_0.
  \]
  In particular, $\|\mathbf{u}\|_1 = O(\lambda^{-1})$ as $\lambda
  \rightarrow \infty$.
\end{lemma}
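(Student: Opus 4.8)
The plan is to combine the weak form of~\eqref{eq:elasticity_mixed} with the $a$-orthogonal splitting $\mathbf{V}=\mathbf{V}^0\oplus\mathbf{V}^\bot$. The weak problem reads: find $(\mathbf{u},p)\in\mathbf{V}\times L^2_0(\Omega)$ with
\[
  a(\mathbf{u},\mathbf{v})+(p,\nabla\cdot\mathbf{v})=-(\phi,\nabla\cdot\mathbf{v}),
  \qquad
  (\nabla\cdot\mathbf{u},q)=\tfrac{1}{\lambda}(p,q)
\]
for all $(\mathbf{v},q)\in\mathbf{V}\times L^2_0(\Omega)$, where we have used that $\mathbf{f}=\nabla\phi$ acts on $\mathbf{v}\in H^1_0(\Omega;\mathbb{R}^d)$ as $\mathbf{v}\mapsto-(\phi,\nabla\cdot\mathbf{v})$; existence and uniqueness of $(\mathbf{u},p)$ are standard.

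To see that $\mathbf{u}^0=0$, I would test the momentum equation with $\mathbf{v}=\mathbf{u}^0\in\mathbf{V}^0$: the pressure term vanishes because $\nabla\cdot\mathbf{u}^0=0$, and $a(\mathbf{u}^\bot,\mathbf{u}^0)=0$ by the definition of $\mathbf{V}^\bot$, so only $a(\mathbf{u}^0,\mathbf{u}^0)=-(\phi,\nabla\cdot\mathbf{u}^0)=0$ survives. Since $a$ is a scalar product on $\mathbf{V}$ (i.e.\ by Korn's inequality), this forces $\mathbf{u}^0=0$, hence $\mathbf{u}=\mathbf{u}^\bot$ and $\|\mathbf{u}\|_1=\|\mathbf{u}^\bot\|_1$.

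For the quantitative estimate I would eliminate $p=\lambda\nabla\cdot\mathbf{u}$ via the second equation and test the momentum equation with $\mathbf{v}=\mathbf{u}$, which yields the energy identity
\[
  2\mu\|\varepsilon(\mathbf{u})\|_0^2+\lambda\|\nabla\cdot\mathbf{u}\|_0^2
  =-(\phi,\nabla\cdot\mathbf{u})\le\|\phi\|_0\,\|\nabla\cdot\mathbf{u}\|_0 .
\]
The point that produces the improved denominator $\mu+\lambda$ instead of merely $\lambda$ is the pointwise bound $|\nabla\cdot\mathbf{u}|^2=|\mathrm{tr}\,\varepsilon(\mathbf{u})|^2\le d\,|\varepsilon(\mathbf{u})|^2$, hence $\|\varepsilon(\mathbf{u})\|_0^2\ge\tfrac{1}{d}\|\nabla\cdot\mathbf{u}\|_0^2$; inserting this on the left of the energy identity and dividing gives $\|\nabla\cdot\mathbf{u}\|_0\le\tfrac{c}{\mu+\lambda}\|\phi\|_0$ with $c$ depending only on $d$.

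It remains to pass from $\|\nabla\cdot\mathbf{u}\|_0$ to $\|\mathbf{u}\|_1$, and this is the only step that needs a little care, because the splitting $\mathbf{V}=\mathbf{V}^0\oplus\mathbf{V}^\bot$ is orthogonal with respect to $a$, not with respect to $\|\cdot\|_1$. I would invoke the surjectivity of $\nabla\cdot\colon H^1_0(\Omega;\mathbb{R}^d)\to L^2_0(\Omega)$ with a bounded right inverse (the inf-sup condition with constant $\beta$): choose $\mathbf{v}\in\mathbf{V}$ with $\nabla\cdot\mathbf{v}=\nabla\cdot\mathbf{u}^\bot$ and $\|\mathbf{v}\|_1\le\beta^{-1}\|\nabla\cdot\mathbf{u}^\bot\|_0$; splitting $\mathbf{v}=\mathbf{v}^0+\mathbf{v}^\bot$ one has $\nabla\cdot\mathbf{v}^\bot=\nabla\cdot\mathbf{u}^\bot$, so $\mathbf{u}^\bot-\mathbf{v}^\bot\in\mathbf{V}^0\cap\mathbf{V}^\bot=\{0\}$, while $a$-orthogonality gives $\|\varepsilon(\mathbf{v}^\bot)\|_0\le\|\varepsilon(\mathbf{v})\|_0$, so Korn's inequality yields $\|\mathbf{u}^\bot\|_1=\|\mathbf{v}^\bot\|_1\le c\,\|\nabla\cdot\mathbf{u}^\bot\|_0$. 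Chaining this with the bound of the previous paragraph gives $\|\mathbf{u}\|_1=\|\mathbf{u}^\bot\|_1\le\tfrac{c}{\mu+\lambda}\|\phi\|_0$, and in particular $\|\mathbf{u}\|_1=O(\lambda^{-1})$; all constants are independent of $\mu$ and $\lambda$.
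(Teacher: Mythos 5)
Your proof is correct. Note, however, that the paper itself does not prove Lemma~\ref{lem:u0} at all: it imports the statement verbatim from \cite[Theorem~1]{FuGuoshengLehrenfeldChristophLinkeStrechenbach:2020}, so there is no in-paper argument to compare against. Your route is the standard one and matches the proof in that reference: testing the momentum equation with $\mathbf{v}=\mathbf{u}^0$ kills the pressure and right-hand-side terms and forces $\mathbf{u}^0=0$; eliminating $p=\lambda\nabla\cdot\mathbf{u}$ and testing with $\mathbf{u}$ gives the energy identity; the pointwise bound $|\operatorname{tr}\varepsilon(\mathbf{u})|^2\le d\,|\varepsilon(\mathbf{u})|^2$ is exactly what upgrades the denominator from $\lambda$ to $\mu+\lambda$; and the final step $\|\mathbf{u}^\bot\|_1\le c\,\|\nabla\cdot\mathbf{u}^\bot\|_0$, which you derive from the bounded right inverse of the divergence together with the $a$-orthogonality of the splitting, is precisely the continuous analogue of the estimate the authors later invoke in discrete form as \cite[Lemma~3.58]{John:2016} in the proofs of Theorems~\ref{thm:gradient_robust} and~\ref{thm:gradient_robust_error}, so your argument is fully consistent with the toolkit used elsewhere in the paper. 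Two cosmetic remarks: the division by $\|\nabla\cdot\mathbf{u}\|_0$ in the energy estimate should be guarded by the trivial case $\nabla\cdot\mathbf{u}=0$ (in which case $\mathbf{u}\in\mathbf{V}^0$ and hence $\mathbf{u}=0$ by the first step), and since the paper works with $Q=L^2(\Omega)$ rather than $L^2_0(\Omega)$ the elimination $p=\lambda\nabla\cdot\mathbf{u}$ holds directly without any mean-value normalization; neither point affects the validity of the argument.
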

Since this bound need not hold for arbitrary
discretizations,~\cite{FuGuoshengLehrenfeldChristophLinkeStrechenbach:2020}
introduced the following notion
\begin{definition}\label{def:gradientrobust}
  A discretization of~\eqref{eq:elasticity_mixed} is called
  \emph{gradient robust}, if for any discretization parameter $h$
  there is a constant $c_h$ such that
  the approximate solution $\mathbf{u}_h$ satisfies
  \[
  \|\mathbf{u}_h\|_1 \le \frac{c_h}{\lambda}\|\phi\|_0
  \]
\end{definition}
\subsection{Abstract Discretization}
In order to discretize~\eqref{eq:elasticity_mixed}, we define a second
bilinear form $b\colon Q \times \mathbf{V} \rightarrow \mathbb{R}$,
with $Q=L^2(\Omega)$, by
\begin{equation}\label{eq:bilinear_b}
  b(q,\mathbf{v}) = (p,\nabla \cdot \mathbf{v}).
\end{equation}
Now we select subspaces $\mathbf{V}_h \subset \mathbf{V}$ and $Q_h
\subset Q$ such that there is a positive constant $\beta$ satisfying
the inf-sup condition
\begin{equation}\label{eq:inf-sup}
  \inf_{q_h \in Q_h} \sup_{\mathbf{v}_h \in \mathbf{V}_h} \frac{\left(q_h , \nabla \cdot \mathbf{v}_h\right)}{\|q_h\|_0 \lvert\mathbf{v}_h\rvert_1} \geq \beta.
\end{equation} 

Now, the standard, non gradient robust, weak formulation is given as
follows:
Find $(\mathbf{u}_h, p_h) \in \mathbf{V}_h \times Q_h$ such that
\begin{equation} \label{eq:weak_form}
  \begin{aligned}
    a(\mathbf{u}_h,\mathbf{v}_h) + b(p_h, \mathbf{v}_h)  & = (\mathbf{f},\mathbf{v}_h) &
    \forall& \mathbf{v}_h \in \mathbf{V}_h, \\
    b(q_h,\mathbf{u}_h) - \frac{1}{\lambda} (p_h,q_h)& = 0 &
    \forall& q_h \in Q_h.
  \end{aligned}
\end{equation}
Under the well known inf-sup condition~\eqref{eq:inf-sup} on
$\mathbf{V}_h$ and $Q_h$, the system~\eqref{eq:weak_form} is uniquely
solvable~\cite[Theorem~5.5.2]{BoffiBrezziFortin:2013}. Following~\cite[Proposition~5.5.3]{BoffiBrezziFortin:2013}
the displacement error is thus bounded as follows:
\begin{equation} \label{eq:disp_error}
  \|\mathbf{u} - \mathbf{u}_h\|_1 \le \frac{c}{\beta}\inf_{\mathbf{v}_h \in \mathbf{V}_h} \|\mathbf{u} - \mathbf{v}_h\|_1 + 
  \frac{1}{\mu}\left(\frac{1}{\lambda} + 1\right)\inf_{q_h \in Q_h}\|p - q_h\|_0.
\end{equation}
Following~\cite{Linke:2014}, we assume that there exists a reconstruction operator
\begin{equation*}
  \mathbf{\pi}^{\rm{div}} \colon \mathbf{V}_h \rightarrow
  H^{\rm{div}}(\Omega)
  = \left\{ \mathbf{v} \in L^2(\Omega)^d\,:\, \nabla \cdot \mathbf{v} \in L^2(\Omega)\right\},
\end{equation*}
to be specified later in Section~\ref{sec:reconstruction},
mapping discretely divergence free functions to divergence free
functions. Then the modified problem is given as:
\begin{equation} \label{eq:modified_weak_form}
  \begin{aligned}
    a(\mathbf{u}_h,\mathbf{v}_h) + b(p_h, \mathbf{v}_h)  & = (\mathbf{f},\mathbf{\pi}^{\rm{div}}\mathbf{v}_h) &
    \forall& \mathbf{v}_h \in \mathbf{V}_h, \\
    b(q_h,\mathbf{u}_h) - \frac{1}{\lambda} (p_h,q_h)& = 0 &
    \forall& q_h \in Q_h.
  \end{aligned}
\end{equation}
Clearly, by construction, the modified
problem~\eqref{eq:modified_weak_form} admits a solution under the
same conditions as~\eqref{eq:weak_form}, since only the right hand
side has been modified.
In Theorem~\ref{thm:gradient_robust}, we will see that the
discretization~\eqref{eq:modified_weak_form} is gradient robust,
under appropriate assumptions on $\mathbf{\pi}^{\rm{div}}$.
Further, in Theorem~\ref{thm:gradient_robust_error}, we show the gradient robust displacement error estimate 
\begin{equation*}
  \|\mathbf{u} - \mathbf{u}_h\|_1 \le ch^k \left(1 + \sqrt{\frac{\mu}{\lambda}}\right)\|\mathbf{u}\|_{k+1} +  
  c\frac{h^k}{\mu\lambda}\|p\|_k,
\end{equation*}
where $\|\,\cdot\,\|_k$ denotes the norm on $H^{k}(\Omega)$ or
$H^{k}(\Omega;\mathbb{R}^d)$; of course assuming sufficient
regularity of $\mathbf{u}$ and $p$ and approximation order of
$\mathbf{V}_h$ and $Q_h$.
\subsection{Reconstruction Operator and Assumptions}\label{sec:reconstruction}
The construction of the reconstruction operator
$\mathbf{\pi}^{\rm{div}}$ proposed by~\cite{Linke:2014} is based on
the choice of a suitable subspace $\mathcal{M}_h \subset
H^{\rm{div}}(\Omega)$ satisfying the commuting diagram in
Figure~\ref{fig:comm_diag}
where $\pi^{L^2}$ denotes the $L^2$-projection onto $Q_h$. 
\begin{figure}[H]
  \centering
  \includegraphics[width=0.3\textwidth]{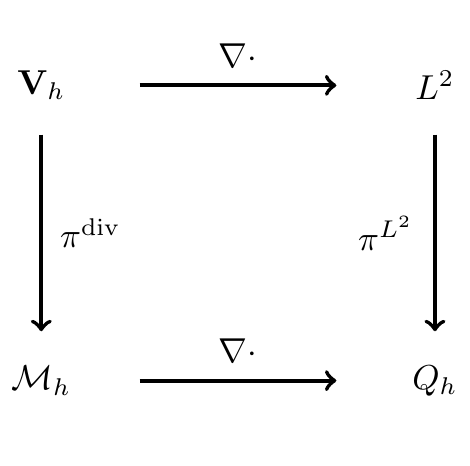} 
  \caption{Commutative diagram for the reconstruction operator $\mathbf{\pi}^{\rm{div}}$}
  \label{fig:comm_diag}
\end{figure}
The commuting diagram is equivalently expressed by the equation
\begin{equation}\label{eq:comm_prop}
  b(q_h,\mathbf{\pi}^{\rm{div}}\mathbf{v}_h) =
  b(q_h,\mathbf{v}_h) \qquad \forall \mathbf{v}_h \in \mathbf{V}_h, q_h
  \in Q_h,
\end{equation}
holds.
Moreover, defining
\begin{align}
  \mathbf{V}_h^0 &= \{ \mathbf{v}_h \in \mathbf{V}_h\,:\,
  b(q_h,\mathbf{v}_h) = 0 \; \forall q_h \in Q_h\} \label{eq:Vh0},\\
  H^{\rm{div}}_0(\Omega) &= \{\mathbf{v}\in H^{\rm{div}}(\Omega)\,:\, \nabla \cdot v = 0\},
\end{align}
we require that the restriction of $\mathbf{\pi}^{\rm{div}}$
to discretely divergence free functions maps into divergence free
functions, i.e.,
\begin{equation}\label{eq:compatibility_divergence}
  \mathbf{\pi}^{\rm{div}}\colon \mathbf{V}_h^0 \rightarrow H^{\rm{div}}_0(\Omega)
\end{equation}
and further for any $\mathbf{v}_h \in \mathbf{V}_h$ it holds 
\begin{equation}\label{eq:compatibility_boundarytrace}
  \mathbf{\pi}^{\rm{div}}\mathbf{v}_h \cdot \mathbf{n} = 0 \quad
  \text{on }\partial \Omega.
\end{equation}
Analogously to the continuous setting, we can define the orthogonal
complement $\mathbf{V}_h^\bot$ by
\[
\mathbf{V}_h^\bot = \left\{ \mathbf{u}_h \in \mathbf{V}_h :
a(\mathbf{u}_h, \mathbf{v}_h) = 0 , \forall \, \mathbf{v}_h \in
\mathbf{V}_h^{0} \right\},
\]
and the corresponding discrete decomposition $\mathbf{u}_h = \mathbf{u}_h^0
+ \mathbf{u}_h^\bot \in \mathbf{V}_h^0 \oplus \mathbf{V}_h^\bot$.

Before we continue, let us make some, generic assumptions on the
considered spaces $\mathbf{V}_h$ and $Q_h$ defined on a shape regular
family $\mathcal T_h$ of decompositions of $\Omega$.
\begin{assumption}\label{ass:fespace}
  We assume, that for some $k \ge 2$ the finite element space
  $\mathbf{V}_h$ is equipped with an interpolation operator
  $I_h \colon H^{k+1}(\Omega;\mathbb{R}^d) \rightarrow \mathbf{V}_h$
  satisfying 
  \[
  h_T^i\|I_h \mathbf{v} - \mathbf{v}\|_{i,T} \le
  ch_T^{k+1}\|\mathbf{v}\|_{k+1,T} \qquad\forall \mathbf{v}\in
  H^{k+1}(\Omega;\mathbb{R}^d), T \in \mathcal{T}_h
  \]
  where $\|\,\cdot\,\|_{i,T}$ denotes the respective norm on the
  element $T$, and $h_T$ is the element diameter.
  For the space $Q_h$, we assume that the $L^2$-projection
  $\pi^{L^2}=\pi^{L^2}_{k-1} \colon H^{k}(\Omega) \rightarrow Q_h$
  satisfies
  \[
  h_T^i\|\pi^{L^2} q - q\|_{i,T} \le
  ch_T^{k}\|q\|_{k,T} \qquad\forall q\in
  H^{k}(\Omega), T \in \mathcal{T}_h.
  \]
  Further, it is assumed that $\mathbf{V}_h$ and $Q_h$ satisfy the
  inf-sup inequality~\eqref{eq:inf-sup}.
  Finally, we assume that there exists a subspace $\widetilde{\mathbf{Q}}_h
  \subset L^2(\Omega; \mathbb{R}^d)$ such that the respective $L^2$-projection $\pi^{L^2}=\pi^{L^2}_{k-2}$ satisfies
  \[
  h_T^i\|\pi^{L^2} \mathbf{q} - \mathbf{q}\|_{i,T} \le
  ch_T^{k-1}\|\mathbf{q}\|_{k-1,T} \qquad\forall \mathbf{q}\in
  H^{k}(\Omega;\mathbb{R}^d), T \in \mathcal{T}_h.
  \]
  Further requirements on $\widetilde{\mathbf{Q}}_h$ will be made in Assumption~\ref{ass:1}.
\end{assumption}
With these preparations, we can now state the additional assumptions
on the recovery operator.
\begin{assumption}\label{ass:1}
  We first assume, that the recovery operator satisfies the following
  orthogonality relation
  \begin{equation}
    \left(\mathbf{v}_h - \mathbf{\pi}^{\rm{div}} \mathbf{v}_h,
    \mathbf{q}\right)  = 0\qquad  \forall \mathbf{v}_h \in \mathbf{V}_h,
    \mathbf{q}\in \widetilde{\mathbf{Q}}_h, \label{eq:ass_ortho}
  \end{equation}
  where $\widetilde{\mathbf{Q}}_h \subset L^2(\Omega;\mathbb{R}^d)$ is given in Assumption~\ref{ass:fespace}.
  Second, we assume the following local approximation property to hold
  \begin{equation}
    \|\mathbf{\pi}^{\rm{div}} \mathbf{v}_h - \mathbf{v}_h\|_{0,T}
    \le ch^m_T\lvert\mathbf{v}_h\rvert_{m,T} \qquad \forall \;
    \mathbf{v}_h \in \mathbf{V}_h, T \in \mathcal{T}_h, m = 0,
    1.  \label{eq:ass_approx}
  \end{equation}
\end{assumption}
Before concluding the assumption, let us note that the assumptions
can indeed be satisfied. To this end, we give an example which we
will also use for the numerical results in Section~\ref{sec:numerics}.
\begin{example}
  Let us assume that the domain can be decomposed into a
  family $\mathcal T_h$ of shape regular rectangular ($d=2$)
  or brick ($d=3$) elements. 
  For the space $\mathbf{V}_h = \mathbf{V}_h^k$, we consider, parametric, piecewise
  $\mathcal Q_k$ and globally continuous finite elements with $k \ge 2$.
  For the discretization of $Q_h = Q_h^{k-1}$, we select
  the space of discontinuous piecewise $P_{k-1}$ functions.
  Indeed theses pairs satisfy the inf-sup condition~\eqref{eq:inf-sup},
  see, e.g.,~\cite[Sec.~8.6.3 \& 8.7.2]{BoffiBrezziFortin:2013} for
  $k=2$, for arbitrary $k$~\cite[Sec.~3.2]{GiraultRaviart:1986}
  or~\cite{MatthiesTobiska:2002} for mapped pressure spaces.
  Moreover,~\cite[Sec.~4.2.1]{LinkeMatthiesTobiska:2016} showed, that
  the choice $\mathcal{M}_h = \mathcal{BDM}_k$ as space of
  Brezzi-Douglas-Marini elements yield the desired commuting diagram
  property~\eqref{eq:comm_prop} together with the canonical
  interpolation $\mathbf{\pi}^{\rm{div}}$.
  Further, they showed~\cite[Lemma~2.1]{LinkeMatthiesTobiska:2016},
  that the restriction of $\mathbf{\pi}^{\rm{div}}$
  to discretely divergence free functions maps into divergence free
  functions, i.e.,
  \[
  \mathbf{\pi}^{\rm{div}}\colon \{ \mathbf{v}_h \in \mathbf{V}_h\,:\,
  b(q_h,\mathbf{v}_h)\; \forall q_h \in Q_h\} \rightarrow
  \{\mathbf{v}\in H^{\rm{div}}(\Omega)\,:\, \nabla \cdot v = 0\}
  \]
  and further for any $\mathbf{v}_h \in \mathbf{V}_h$ it holds 
  \[
  \mathbf{\pi}^{\rm{div}}\mathbf{v}_h \cdot \mathbf{n} = 0 \quad
  \text{on }\partial \Omega.
  \]
\end{example}

\begin{remark}
  Infact, \cite{LinkeMatthiesTobiska:2016} showed
  that~\eqref{eq:comm_prop} follow from a set of assumed
  orthogonality properties and surjectivity of divergence and normal
  traces from which suitable choices of $\mathcal{M}_h$ and
  constructions of $\mathbf{\pi}^{\rm{div}}$ can be obtained.
\end{remark}

\section{Error Analysis}\label{sec:erroranalysis}
In this section, we proceed with error analysis of the modified weak
form~\eqref{eq:modified_weak_form}.
We split the analysis in two parts for incompressible materials $(\lambda = \infty)$ and nearly incompressible materials
$(\lambda \ne \infty)$. 

\subsection{Incompressible Materials}\label{sec:incompress}
We proceed to the error analysis of incompressible materials, where
$\lambda = \infty$ and the term involving $\frac{1}{\lambda}$ is
dropped in~\eqref{eq:modified_weak_form}. The analysis follows, at
large, the arguments in~\cite{LinkeMerdonWollner:2017} with some
minor adjustments to the elasticity case.

\begin{theorem}\label{thm:thm1_infty}
  Let Assumptions~\ref{ass:fespace} and~\ref{ass:1} be satisfied and
  $\lambda = \infty$. Then 
  the solution $(\mathbf{u}, p) \in H^{k+1}(\Omega)^d \times
  H^k(\Omega)$ of the continuous
  problem~\eqref{eq:elasticity_mixed}
  and the solution $(\mathbf{u}_h, p_h) \in \mathbf{V}_h\times
  Q_h$ of~\eqref{eq:modified_weak_form}
  satisfy the error estimate
  \begin{equation*}
    \lvert\mathbf{u} - \mathbf{u}_h\rvert_{1}^2 \le c\sum\limits_{T
      \in \mathcal{T}_h} h^{2k}_T\lvert\mathbf{u}\rvert^2_{k+1,T}
    \le c h^{2k}\|\mathbf{u}\|_{k+1},
  \end{equation*}
  where $\lvert \,\cdot\, \rvert_1$ denotes the $H^1$-semi-norm.
\end{theorem}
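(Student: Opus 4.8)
The plan is to exploit the structure of the modified scheme on the space $\mathbf{V}_h^0$ of discretely divergence-free functions. Since $\lambda = \infty$, both the continuous solution $\mathbf{u}$ and the discrete solution $\mathbf{u}_h$ are genuinely divergence free, so $\mathbf{u} \in \mathbf{V}^0$ and $\mathbf{u}_h \in \mathbf{V}_h^0$. Testing~\eqref{eq:modified_weak_form} with $\mathbf{v}_h \in \mathbf{V}_h^0$ kills the pressure term $b(p_h,\mathbf{v}_h)$, leaving $a(\mathbf{u}_h,\mathbf{v}_h) = (\mathbf{f}, \mathbf{\pi}^{\rm{div}}\mathbf{v}_h)$. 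On the continuous side, testing~\eqref{eq:elasticity_mixed} with the divergence-free function $\mathbf{\pi}^{\rm{div}}\mathbf{v}_h \in H^{\rm{div}}_0(\Omega)$ (using~\eqref{eq:compatibility_divergence} and~\eqref{eq:compatibility_boundarytrace}) and integrating the term $-\nabla p$ by parts gives $a(\mathbf{u},\mathbf{v}_h) = (\mathbf{f}, \mathbf{\pi}^{\rm{div}}\mathbf{v}_h)$ as well — here one needs $2\mu(\varepsilon(\mathbf{u}),\varepsilon(\mathbf{v}_h))$ to come out; I would write $a(\mathbf{u},\mathbf{v}_h)$ against the unreconstructed test function and then reconcile the $\mathbf{\pi}^{\rm{div}}$-discrepancy in the right-hand side. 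Subtracting yields the crucial \emph{modified Galerkin orthogonality}
\[
  a(\mathbf{u} - \mathbf{u}_h, \mathbf{v}_h) = (\mathbf{f}, \mathbf{v}_h - \mathbf{\pi}^{\rm{div}}\mathbf{v}_h) \qquad \forall \mathbf{v}_h \in \mathbf{V}_h^0.
\]

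Next I would run a Strang-type argument on $\mathbf{V}_h^0$. Pick the interpolant $I_h \mathbf{u} \in \mathbf{V}_h$ from Assumption~\ref{ass:fespace}; since $\mathbf{u}$ is divergence free, $b(q_h, I_h\mathbf{u} - \mathbf{u}) = b(q_h, I_h\mathbf{u})$, and by the inf-sup condition~\eqref{eq:inf-sup} together with the approximation estimate for $I_h$ one can correct $I_h\mathbf{u}$ by an element of $\mathbf{V}_h$ to land in $\mathbf{V}_h^0$ while retaining optimal approximation order; call the result $\mathbf{w}_h \in \mathbf{V}_h^0$ with $\lvert \mathbf{u} - \mathbf{w}_h \rvert_1 \le c\,h^k \lvert \mathbf{u}\rvert_{k+1}$ (summed elementwise). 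Then write $\mathbf{u} - \mathbf{u}_h = (\mathbf{u} - \mathbf{w}_h) + (\mathbf{w}_h - \mathbf{u}_h)$, note $\mathbf{w}_h - \mathbf{u}_h \in \mathbf{V}_h^0$, and estimate
\[
  2\mu \lvert \mathbf{w}_h - \mathbf{u}_h \rvert_1^2 \le c\, a(\mathbf{w}_h - \mathbf{u}_h, \mathbf{w}_h - \mathbf{u}_h)
  = c\, a(\mathbf{w}_h - \mathbf{u}, \mathbf{w}_h - \mathbf{u}_h) + c\,(\mathbf{f}, (\mathbf{w}_h - \mathbf{u}_h) - \mathbf{\pi}^{\rm{div}}(\mathbf{w}_h - \mathbf{u}_h)),
\]
using Korn's inequality on $\mathbf{V}_h$ (standard for $H^1_0$-conforming spaces) to pass between $a(\cdot,\cdot)$ and the $H^1$-seminorm. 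The first term is controlled by Cauchy–Schwarz and the approximation bound on $\mathbf{w}_h$.

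The consistency term $(\mathbf{f}, \mathbf{v}_h - \mathbf{\pi}^{\rm{div}}\mathbf{v}_h)$ with $\mathbf{v}_h = \mathbf{w}_h - \mathbf{u}_h$ is the heart of the estimate, and where pressure robustness enters. Here I would use $\mathbf{f} = 2\mu(-\nabla\cdot\varepsilon(\mathbf{u})) - \nabla p$ from the strong form — but more cleanly, exploit the orthogonality~\eqref{eq:ass_ortho}: for any $\mathbf{q} \in \widetilde{\mathbf{Q}}_h$ we have $(\mathbf{q}, \mathbf{v}_h - \mathbf{\pi}^{\rm{div}}\mathbf{v}_h) = 0$, so $(\mathbf{f}, \mathbf{v}_h - \mathbf{\pi}^{\rm{div}}\mathbf{v}_h) = (\mathbf{f} - \mathbf{q}, \mathbf{v}_h - \mathbf{\pi}^{\rm{div}}\mathbf{v}_h)$. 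Since $\mathbf{f} = -2\mu\,\nabla\cdot\varepsilon(\mathbf{u}) - \nabla p \in H^{k-1}$ under the regularity assumption, choosing $\mathbf{q} = \pi^{L^2}_{k-2}\mathbf{f}$ gives elementwise $\|\mathbf{f} - \mathbf{q}\|_{0,T} \le c\,h_T^{k-1}\|\mathbf{f}\|_{k-1,T}$, and the approximation property~\eqref{eq:ass_approx} with $m=1$ gives $\|\mathbf{v}_h - \mathbf{\pi}^{\rm{div}}\mathbf{v}_h\|_{0,T} \le c\,h_T \lvert\mathbf{v}_h\rvert_{1,T}$. Multiplying, summing over $T$, and applying Cauchy–Schwarz produces $c\,h^k\|\mathbf{f}\|_{k-1}\lvert\mathbf{v}_h\rvert_1$; absorbing $\lvert\mathbf{v}_h\rvert_1 = \lvert\mathbf{w}_h - \mathbf{u}_h\rvert_1$ into the left side and translating $\|\mathbf{f}\|_{k-1}$ back into $\|\mathbf{u}\|_{k+1}$ (plus possibly $\|p\|_k$, which for $\lambda=\infty$ and a gradient-robust scheme should ultimately not pollute — this is the subtle point to check carefully, and is precisely what the reconstruction buys us). Combining with the triangle inequality and the $\mathbf{w}_h$ bound yields the claimed elementwise estimate. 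The main obstacle I anticipate is the careful bookkeeping in the consistency term to confirm that no $\|p\|_k$ contribution survives (it should cancel because $\nabla p$ is itself a gradient and~\eqref{eq:ass_ortho}/\eqref{eq:comm_prop} annihilate gradients against $\mathbf{v}_h - \mathbf{\pi}^{\rm{div}}\mathbf{v}_h$), and secondly ensuring the correction step producing $\mathbf{w}_h \in \mathbf{V}_h^0$ is compatible with the elementwise (rather than merely global) form of the estimate.
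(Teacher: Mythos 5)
Your proposal follows essentially the same route as the paper: restrict to $\mathbf{V}_h^0$, test the continuous equation with the divergence-free reconstruction to eliminate the pressure exactly, bound the resulting consistency error via the orthogonality~\eqref{eq:ass_ortho} against $\pi^{L^2}_{k-2}$ of the elliptic term together with the local approximation property~\eqref{eq:ass_approx} (this is precisely the paper's Lemma~\ref{lem:consistency}), and use the inf-sup condition to pass from best approximation in $\mathbf{V}_h^0$ to $\mathbf{V}_h$. One caution: the displayed identity $a(\mathbf{u}-\mathbf{u}_h,\mathbf{v}_h)=(\mathbf{f},\mathbf{v}_h-\mathbf{\pi}^{\rm{div}}\mathbf{v}_h)$ omits a term $-b(p,\mathbf{v}_h)$, but this is exactly offset by the $-\nabla p$ contribution to $(\mathbf{f},\mathbf{v}_h-\mathbf{\pi}^{\rm{div}}\mathbf{v}_h)$, since $\nabla\cdot\mathbf{\pi}^{\rm{div}}\mathbf{v}_h=0$ for $\mathbf{v}_h\in\mathbf{V}_h^0$; so the cancellation you flag as the subtle point does indeed close, and no $\|p\|_k$ term survives.
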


Before proving the above theorem, we would like to prove an important lemma which is need to prove the theorem.
\begin{lemma}\label{lem:consistency}
  Let Assumptions~\ref{ass:fespace} and~\ref{ass:1} be satisfied and
  $\lambda = \infty$.
  Then for any functions $ \mathbf{u} \in H^{k+1}(\Omega)^d$ and
  $\mathbf{w}_h\in\mathbf{V}_h$
  it is
  \begin{equation}
    \bigl\lvert(\nabla  \cdot  \varepsilon(\mathbf{u}), \mathbf{\pi}^{\rm{div}} \mathbf{w}_h) + (\varepsilon(\mathbf{u}), \varepsilon(\mathbf{w}_h))\bigr\rvert \le
    c \sum\limits_{T \in \mathcal{T}_h} h_T^k\lvert\mathbf{u}\rvert_{k+1,T} \lvert\mathbf{w}_h\rvert_{1,T} \label{eq:lemma2},
  \end{equation}
  where $\lvert \,\cdot\, \rvert_{i,T}$ denotes the $H^i$-semi-norm on $T$.
\end{lemma}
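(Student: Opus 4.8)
The plan is to exploit the orthogonality relation~\eqref{eq:ass_ortho} and the approximation property~\eqref{eq:ass_approx} of the reconstruction operator, combined with the standard approximation estimate for the auxiliary projection $\pi^{L^2}_{k-2}$ onto $\widetilde{\mathbf{Q}}_h$ from Assumption~\ref{ass:fespace}. First I would integrate by parts on each element: since $\mathbf{u}$ is smooth enough, $(\nabla\cdot\varepsilon(\mathbf{u}),\mathbf{w}_h) = -(\varepsilon(\mathbf{u}),\varepsilon(\mathbf{w}_h)) + \text{boundary terms}$, where the interelement boundary terms collapse because $\mathbf{w}_h\in\mathbf{V}_h\subset H^1_0$ is continuous and vanishes on $\partial\Omega$ (and $\varepsilon(\mathbf{u})$ is single-valued across faces). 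Hence we may rewrite the quantity of interest as
\[
(\nabla\cdot\varepsilon(\mathbf{u}),\mathbf{\pi}^{\rm{div}}\mathbf{w}_h) + (\varepsilon(\mathbf{u}),\varepsilon(\mathbf{w}_h)) = (\nabla\cdot\varepsilon(\mathbf{u}),\mathbf{\pi}^{\rm{div}}\mathbf{w}_h - \mathbf{w}_h).
\]
So the whole estimate reduces to bounding $(\nabla\cdot\varepsilon(\mathbf{u}),\mathbf{\pi}^{\rm{div}}\mathbf{w}_h-\mathbf{w}_h)$, i.e., controlling the ``consistency error'' introduced by the reconstruction.

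Next, the key trick is to subtract the $\widetilde{\mathbf{Q}}_h$-projection of $\nabla\cdot\varepsilon(\mathbf{u})$: by~\eqref{eq:ass_ortho} we have $(\pi^{L^2}_{k-2}(\nabla\cdot\varepsilon(\mathbf{u})),\mathbf{\pi}^{\rm{div}}\mathbf{w}_h-\mathbf{w}_h)=0$, so
\[
(\nabla\cdot\varepsilon(\mathbf{u}),\mathbf{\pi}^{\rm{div}}\mathbf{w}_h-\mathbf{w}_h) = \bigl(\nabla\cdot\varepsilon(\mathbf{u}) - \pi^{L^2}_{k-2}(\nabla\cdot\varepsilon(\mathbf{u})),\,\mathbf{\pi}^{\rm{div}}\mathbf{w}_h-\mathbf{w}_h\bigr).
\]
Then I would apply Cauchy--Schwarz elementwise, estimate the first factor on each $T$ by $ch_T^{k-1}\|\nabla\cdot\varepsilon(\mathbf{u})\|_{k-1,T}\le ch_T^{k-1}\|\mathbf{u}\|_{k+1,T}$ (using Assumption~\ref{ass:fespace} applied to $\mathbf{q}=\nabla\cdot\varepsilon(\mathbf{u})\in H^{k-1}$), and the second factor by~\eqref{eq:ass_approx} with $m=1$: $\|\mathbf{\pi}^{\rm{div}}\mathbf{w}_h-\mathbf{w}_h\|_{0,T}\le ch_T\lvert\mathbf{w}_h\rvert_{1,T}$. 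Multiplying gives $ch_T^{k}\lvert\mathbf{u}\rvert_{k+1,T}$ — wait, one must be careful to replace $\|\mathbf{u}\|_{k+1,T}$ by the semi-norm $\lvert\mathbf{u}\rvert_{k+1,T}$ as claimed; this is legitimate because $\nabla\cdot\varepsilon(\mathbf{u})$ depends only on third derivatives of $\mathbf{u}$, so its $H^{k-1}$-norm is controlled by the $H^{k+1}$-\emph{semi}-norm of $\mathbf{u}$. Summing over $T\in\mathcal{T}_h$ yields~\eqref{eq:lemma2}.

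The main obstacle I anticipate is the bookkeeping around the projection degrees: one needs $\nabla\cdot\varepsilon(\mathbf{u})\in H^{k-1}(\Omega;\mathbb{R}^d)$ to invoke the $\widetilde{\mathbf{Q}}_h$-approximation bound, which requires $\mathbf{u}\in H^{k+1}$ — consistent with the hypothesis — and one must check that the constant hidden in ``$\|\nabla\cdot\varepsilon(\mathbf{u})\|_{k-1,T}\le c\lvert\mathbf{u}\rvert_{k+1,T}$'' is scale-invariant (it is, since differentiation is homogeneous). A secondary subtlety is justifying the elementwise integration by parts and the cancellation of face terms for $\mathbf{\pi}^{\rm{div}}\mathbf{w}_h$ versus $\mathbf{w}_h$ — but since we have rewritten everything in terms of $\mathbf{\pi}^{\rm{div}}\mathbf{w}_h-\mathbf{w}_h$ tested against the \emph{smooth} function $\nabla\cdot\varepsilon(\mathbf{u})\in L^2$, no face terms ever appear and this concern evaporates. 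Everything else is routine Cauchy--Schwarz and summation.
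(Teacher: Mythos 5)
Your proposal is correct and follows essentially the same route as the paper's proof: reduce the expression to $(\nabla\cdot\varepsilon(\mathbf{u}),\mathbf{\pi}^{\rm{div}}\mathbf{w}_h-\mathbf{w}_h)$ via integration by parts (the paper adds and subtracts $(\nabla\cdot\varepsilon(\mathbf{u}),\mathbf{w}_h)$ and kills the remainder with the divergence theorem and $\mathbf{w}_h|_{\partial\Omega}=0$, which is the same computation in a different order), then insert the $\widetilde{\mathbf{Q}}_h$-projection using the orthogonality~\eqref{eq:ass_ortho} and conclude with elementwise Cauchy--Schwarz, the projection estimate, and~\eqref{eq:ass_approx} with $m=1$. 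The only slip is cosmetic: $\nabla\cdot\varepsilon(\mathbf{u})$ involves \emph{second} derivatives of $\mathbf{u}$, not third, so its $H^{k-1}$-\emph{semi}-norm (which is what the Bramble--Hilbert form of the projection estimate requires) is what is bounded by $\lvert\mathbf{u}\rvert_{k+1,T}$.
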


\begin{proof}
  We add and subtract $\left(\nabla  \cdot  \varepsilon(\mathbf{u}) ,
  \mathbf{w}_h\right)$ on the left to obtain
  \begin{equation}\label{eq:lemma2_proof1}
    \begin{aligned}
      (\nabla  \cdot  \varepsilon(\mathbf{u}),
      \mathbf{\pi}^{\rm{div}} \mathbf{w}_h) +
      (\varepsilon(\mathbf{u}), \varepsilon(\mathbf{w}_h)) 
      =&\; (\nabla  \cdot \varepsilon(\mathbf{u}), \mathbf{\pi}^{\rm{div}}\mathbf{w}_h - \mathbf{w}_h)\\
      &\;+(\varepsilon(\mathbf{u}), \varepsilon(\mathbf{w}_h)) + (\nabla  \cdot  \varepsilon(\mathbf{u}), \mathbf{w}_h).
    \end{aligned}
  \end{equation}
  Since $\nabla  \cdot  \varepsilon(\mathbf{u}) \in L^2(\Omega;\mathbb{R}^d)$, we
  can apply the projection $\pi^{L^2}_{k-2}$, from
  Assumption~\ref{ass:fespace}, to get
  $\mathbf{\pi}^{L^2}_{k-2} \nabla  \cdot  \varepsilon(\mathbf{u}) \in  \widetilde{\mathbf{Q}}_h$.
  By the assumed orthogonality in~\eqref{eq:ass_ortho}, we have
  \[\left(\mathbf{\pi}^{L^2}_{k-2} \nabla  \cdot
  \varepsilon(\mathbf{u}), \mathbf{\pi}^{\rm{div}}\mathbf{w}_h -
  \mathbf{w}_h\right) = 0, \qquad \forall \; \mathbf{w}_h \in
  \mathbf{V}_h .
  \]
  Using Assumption~\ref{ass:fespace} and~\eqref{eq:ass_approx}, we
  obtain, for the first summand on the right
  of~\eqref{eq:lemma2_proof1}, 
  \begin{equation}\label{eq:lemma2_proof2}
    \begin{aligned}
      \Bigl( \nabla  \cdot  \varepsilon(\mathbf{u})&,
      \mathbf{\pi}^{\rm{div}} \mathbf{w}_h - \mathbf{w}_h\Bigr)
      =  \left(\nabla  \cdot  \varepsilon(\mathbf{u}) - \mathbf{\pi}^{L^2}_{k-2} \nabla  \cdot  \varepsilon(\mathbf{u}),
      \mathbf{\pi}^{\rm{div}} \mathbf{w}_h - \mathbf{w}_h\right)  \\
      & \le \sum\limits_{T \in \mathcal{T}_h} \|\nabla  \cdot  \varepsilon(\mathbf{u}) - \mathbf{\pi}^{L^2}_{k-2} \nabla  \cdot  \varepsilon(\mathbf{u})\|_{0, T}
      \|\mathbf{\pi}^{\rm{div}}\mathbf{w}_h - \mathbf{w}_h\|_{0, T} \\
      & \le \sum\limits_{T\in\mathcal{T}_h} c h_T^{k-1}\lvert\nabla  \cdot  \varepsilon(\mathbf{u})\rvert_{k-1,T} 
      h\lvert\mathbf{w}_h\rvert_{1,T} \\
      & \le \sum\limits_{T\in\mathcal{T}_h} ch_T^k\lvert\mathbf{u}\rvert_{k+1,T}\lvert\mathbf{w}_h\rvert_{1,T}. 
    \end{aligned}
  \end{equation}
  For the last two summands of~\eqref{eq:lemma2_proof1}, we apply
  Gauss divergence theorem to get
  \begin{equation}\label{eq:lemma2_proof3}
    (\nabla  \cdot  \varepsilon(\mathbf{u}), \mathbf{w}_h) + (\varepsilon(\mathbf{u}), \varepsilon(\mathbf{w}_h)) 
    = \int\limits_{\partial \Omega} \varepsilon(\mathbf{u})\cdot
    \mathbf{n} \; \mathbf{w}_h \,\mathrm{d}s = 0 
  \end{equation}
  since $\mathbf{w}_h = 0$ on $\partial \Omega$.
  Combining~\eqref{eq:lemma2_proof1} with the
  bounds~\eqref{eq:lemma2_proof2} and~\eqref{eq:lemma2_proof3} the
  assertion is shown.
\end{proof}

Now, we continue to prove Theorem~\ref{thm:thm1_infty}
\begin{proof} (of Theorem~\ref{thm:thm1_infty})
  Let $\mathbf{u}_h$ be the solution
  of~\eqref{eq:modified_weak_form}, with $\lambda = \infty$,
  and let $\mathbf{v}_h \in \mathbf{V}_h^0$ be arbitrary.
  Defining $\mathbf{w}_h = \mathbf{u}_h - \mathbf{v}_h \in
  \mathbf{V}_h^0$ and applying the triangle inequality gives 
  \begin{equation}
    \lvert\mathbf{u} - \mathbf{u}_h\rvert_1 = \lvert\mathbf{u} - \mathbf{w}_h -\mathbf{v}_h\rvert_{1} \le \lvert\mathbf{u} - \mathbf{v}_h\rvert_1 + \lvert\mathbf{w}_h\rvert_1. \label{eq:thm1_proof1}
  \end{equation}
  In view of the interpolation estimate in
  Assumption~\ref{ass:fespace}, we are left to 
  estimate $\lvert\mathbf{w}_h\rvert_1$. From Korn's inequality, we have
  \[
  c\lvert\mathbf{w}_h\rvert^2_1 = c \|\mathbf{w}_h\|^2_1 \le
  \|\varepsilon(\mathbf{w}_h)\|^2_0 .
  \]
  From this, we conclude
  \begin{equation}\label{eq:thm1_proof2}
    \begin{aligned}
      2\mu c\lvert\mathbf{w}_h\rvert^2_1 & \le a(\mathbf{w}_h, \mathbf{w}_h) \\
      & = a(\mathbf{u}_h -\mathbf{v}_h, \mathbf{w}_h) \\
      & = a(\mathbf{u}_h - \mathbf{v}_h, \mathbf{w}_h) \\
      & = a(\mathbf{u}_h -\mathbf{v}_h + \mathbf{u}- \mathbf{u}, \mathbf{w}_h) \\
      & \le \lvert a(\mathbf{u} -\mathbf{v}_h, \mathbf{w}_h)\rvert  + 
      \lvert a(\mathbf{u}_h - \mathbf{u}, \mathbf{w}_h)\rvert. 
    \end{aligned}
  \end{equation}
  For the first summand on the right of~\eqref{eq:thm1_proof2} we use
  Cauchy-Schwartz inequality to get
  \begin{equation}\label{eq:thm1_proof3}
    \lvert a(\mathbf{u} - \mathbf{v}_h,
    \mathbf{w}_h)\rvert
    \le
    2 \mu \|\varepsilon(\mathbf{u} - \mathbf{v}_h)\|_0\|\varepsilon(\mathbf{w}_h)\|_0, 
    \le 2 \mu \lvert\mathbf{u} - \mathbf{v}_h\rvert_1 \lvert\mathbf{w}_h\rvert_1. 
  \end{equation}
  
  Before we come to the bound of the second summand
  in~\eqref{eq:thm1_proof2}, we make some preliminary calculations.
  Since $\mathbf{u}_h$ is the solution
  of~\eqref{eq:modified_weak_form}, choosing $\mathbf{v}_h =
  \mathbf{w}_h \in \mathbf{V}_h^0$ gives 
  \begin{equation}\label{eq:thm1_proof4}
    a(\mathbf{u}_h,\mathbf{w}_h)
    = a(\mathbf{u}_h,\mathbf{w}_h) + b(p_h,\mathbf{w}_h)
    = (\mathbf{f}, \mathbf{\pi}^{\rm{div}} \mathbf{w}_h).
  \end{equation}
  Further, since $\mathbf{u}$ is the solution to the equation~\eqref{eq:elasticity_mixed}
  multiplication with $\mathbf{\pi}^{\rm{div}} \mathbf{w}_h$ and
  integration yields
  \begin{equation*}
    -2\mu \int\limits_\Omega \nabla  \cdot  \varepsilon(\mathbf{u}) \mathbf{\pi}^{\rm{div}}\mathbf{w}_h \,\mathrm{d}x - 
    \int\limits_\Omega \nabla p \;\mathbf{\pi}^{\rm{div}} \mathbf{w}_h \,\mathrm{d}x 
    = \int\limits_\Omega  \mathbf{f} \mathbf{\pi}^{\rm{div}} \mathbf{w}_h \,\mathrm{d}x  
  \end{equation*}
  by the compatibility of the reconstruction with the kernel of the
  divergence, i.e.,~\eqref{eq:compatibility_divergence}, this gives
  \begin{equation*}
    -2\mu ( \nabla  \cdot  \varepsilon(\mathbf{u}), \mathbf{\pi}^{\rm{div}}\mathbf{w}_h) = (\mathbf{f}, \mathbf{\pi}^{\rm{div}}\mathbf{w}_h) 
  \end{equation*}
  Combining this with~\eqref{eq:thm1_proof4}, we get
  \begin{equation}\label{eq:thm1_proof5}
    a(\mathbf{u}_h, \mathbf{w}_h)  = -2\mu(\nabla  \cdot  \varepsilon(\mathbf{u}), \mathbf{\pi}^{\rm{div}} \mathbf{w}_h). 
  \end{equation}

  Now, we can bound the second summand on the right
  of~\eqref{eq:thm1_proof2}, using~\eqref{eq:thm1_proof5} we get
  \begin{equation*}
    \begin{aligned}
      \lvert a(\mathbf{u}_h - \mathbf{u}, \mathbf{w}_h)\rvert  & 
      = \Bigl\lvert-2\mu (\nabla  \cdot  \varepsilon(\mathbf{u}), \mathbf{\pi}^{\rm{div}} \mathbf{w}_h) - 
      2 \mu (\varepsilon(\mathbf{u}), \varepsilon(\mathbf{w}_h))\Bigr\rvert \\
      & \le 2 \mu \Bigl\lvert(\nabla  \cdot  \varepsilon(\mathbf{u}), \mathbf{\pi}^{\rm{div}} \mathbf{w}_h) + 
      (\varepsilon(\mathbf{u}), \varepsilon(\mathbf{w}_h))\Bigr\rvert. 
    \end{aligned}
  \end{equation*}
  By the previously shown lemma, i.e.,~\eqref{eq:lemma2}, we can bound
  the right hand side to get 
  \begin{equation}\label{eq:thm1_proof6}
    \begin{aligned}
      \lvert a(\mathbf{u}_h - \mathbf{u}, \mathbf{w}_h)\rvert  & \le 
      2\mu c \sum\limits_{T \in \mathcal{T}_h} \left( h^k_T\lvert\mathbf{u}\rvert_{k+1,T}\lvert\mathbf{w}_h\rvert_{1,T} \right) \\
      & \le 2\mu c \left( \sum\limits_{T \in \mathcal{T}_h}  h^{2k}_T\lvert\mathbf{u}\rvert^2_{k+1,T}\right)^{\frac{1}{2}}\lvert\mathbf{w}_h\rvert_1.
    \end{aligned}
  \end{equation}
  
  Now combining~\eqref{eq:thm1_proof2} with the two
  bounds~\eqref{eq:thm1_proof3} and~\eqref{eq:thm1_proof6}, we get 
  \begin{equation*}
    \lvert\mathbf{w}_h\rvert_1  \le \lvert\mathbf{u} - \mathbf{v}_h\rvert_1 + c\left( \sum\limits_{T \in \mathcal{T}_h} h^{2k}_T \lvert\mathbf{u}\rvert^2_{k+1, T}\right)^{\frac{1}{2}}.  
  \end{equation*}
  Substituting this in~\eqref{eq:thm1_proof1} yields  
  \begin{equation}\label{eq:thm1_proof7}
    \lvert\mathbf{u}- \mathbf{u}_h\rvert_1  \le 2\lvert\mathbf{u} - \mathbf{v}_h\rvert_1 + c \left( \sum\limits_{T \in \mathcal{T}_h} h^{2k}_T \lvert\mathbf{u}\rvert^2_{k+1, T}\right)^{\frac{1}{2}}.
  \end{equation}
  To bound the best approximation error on $\mathbf{V}_h^0$ in this inequality,
  we proceed using inf-sup condition
  as in~\cite[Chapter~2, (1.16)]{GiraultRaviart:1986} and the assumed
  interpolation estimate on $\mathbf{V}_h$ in Assumption~\ref{ass:fespace}, to get the
  estimate
  \begin{equation*}
    \inf\limits_{\mathbf{v}_h \in \mathbf{V}_h^0} \lvert\mathbf{u} - \mathbf{v}_h\rvert_1 \le c \inf\limits_{\mathbf{v}_h \in \mathbf{V}_h} \lvert\mathbf{u} - \mathbf{v}_h\rvert_1 
    \le c\left( \sum\limits_{T \in \mathcal{T}_h}  h^{2k}_T \lvert\mathbf{u}\rvert^2_{k+1, T}\right)^{\frac{1}{2}}.
  \end{equation*}
  Using this in~\eqref{eq:thm1_proof7} gives the desired estimate.
\end{proof}

\subsection{Nearly Incompressible Materials}
For the nearly incompressible case, i.e., $(\lambda \neq \infty)$, we
start by assuming a gradient force $\mathbf{f} = \nabla \phi$, for
some $\phi \in L^2(\Omega)$.  
From Lemma~\ref{lem:u0}, we have that the solution
of~\eqref{eq:elasticity_mixed} for such an $\mathbf{f}$ is
$\mathbf{u} = 0$. The following result shows, that out mixed
discretization~\eqref{eq:modified_weak_form} is gradient robust
in the sense of Definition~\ref{def:gradientrobust}.

\begin{theorem}\label{thm:gradient_robust}
  Let Assumptions~\ref{ass:fespace} and~\ref{ass:1} be satisfied.
  If the right hand side $\mathbf{f} \in
  H^{-1}(\Omega;\mathbb{R}^d)$
  of equation~\eqref{eq:modified_weak_form}
  is a gradient field, i.e.,
  $\mathbf{f} = \nabla \phi$, for some $\phi \in L^2(\Omega)$,
  then the solution $(\mathbf{u}_h, p_h) \in \mathbf{V}_h \times
  Q_h$ of~\eqref{eq:modified_weak_form} with $\lambda \neq \infty$ satisfies
  \begin{equation}\label{eq:thm4_error}
    \|\mathbf{u}_h\|_1\le  c\frac{1}{\lambda + \mu} \|\phi\|_0.
  \end{equation}
  with a constant $c$ independent of $h$.
\end{theorem}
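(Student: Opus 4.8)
The plan is to transfer the continuous argument behind Lemma~\ref{lem:u0} to the discrete level, exploiting that, by construction, $\mathbf{\pi}^{\rm{div}}$ lets a gradient force $\mathbf f=\nabla\phi$ act on a test function only through its discretely divergence free part. First I would rewrite the load functional. Since $\mathbf f=\nabla\phi$ and $\mathbf{\pi}^{\rm{div}}\mathbf v_h\cdot\mathbf n=0$ on $\partial\Omega$ by~\eqref{eq:compatibility_boundarytrace}, integration by parts gives $(\mathbf f,\mathbf{\pi}^{\rm{div}}\mathbf v_h)=-(\phi,\nabla\cdot\mathbf{\pi}^{\rm{div}}\mathbf v_h)$ for all $\mathbf v_h\in\mathbf V_h$. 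Using that $\nabla\cdot\mathbf{\pi}^{\rm{div}}\mathbf v_h=\pi^{L^2}(\nabla\cdot\mathbf v_h)\in Q_h$ (the commuting diagram of Figure~\ref{fig:comm_diag}, in the form~\eqref{eq:comm_prop}) and that $\pi^{L^2}\phi\in Q_h$, this equals $-(\pi^{L^2}\phi,\nabla\cdot\mathbf{\pi}^{\rm{div}}\mathbf v_h)=-b(\pi^{L^2}\phi,\mathbf{\pi}^{\rm{div}}\mathbf v_h)=-b(\pi^{L^2}\phi,\mathbf v_h)$. Hence the first line of~\eqref{eq:modified_weak_form} becomes
\[
a(\mathbf u_h,\mathbf v_h)+b\bigl(p_h+\pi^{L^2}\phi,\mathbf v_h\bigr)=0\qquad\forall\,\mathbf v_h\in\mathbf V_h.
\]

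Next I would show that the discretely divergence free part of $\mathbf u_h$ vanishes. Testing the reformulated equation with $\mathbf v_h=\mathbf u_h^0\in\mathbf V_h^0$ and using $b(q_h,\mathbf u_h^0)=0$ for all $q_h\in Q_h$ yields $a(\mathbf u_h,\mathbf u_h^0)=0$; equivalently, $(\mathbf f,\mathbf{\pi}^{\rm{div}}\mathbf u_h^0)=-(\phi,\nabla\cdot\mathbf{\pi}^{\rm{div}}\mathbf u_h^0)=0$ since $\mathbf{\pi}^{\rm{div}}\mathbf u_h^0$ is divergence free by~\eqref{eq:compatibility_divergence}. Because $\mathbf u_h^\bot$ is $a$-orthogonal to $\mathbf V_h^0$, this forces $a(\mathbf u_h^0,\mathbf u_h^0)=0$, so Korn's inequality gives $\mathbf u_h^0=0$, i.e. $\mathbf u_h=\mathbf u_h^\bot\in\mathbf V_h^\bot$. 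To bound the pressure I would then test the first line of~\eqref{eq:modified_weak_form} with $\mathbf u_h$ and the second with $p_h$ and subtract; using the rewriting above and the second equation once more this gives $2\mu\|\varepsilon(\mathbf u_h)\|_0^2+\tfrac1\lambda\|p_h\|_0^2=-b(\pi^{L^2}\phi,\mathbf u_h)=-\tfrac1\lambda(\pi^{L^2}\phi,p_h)$, so Cauchy--Schwarz and $\|\pi^{L^2}\phi\|_0\le\|\phi\|_0$ yield $\|p_h\|_0\le\|\phi\|_0$ and $2\mu\|\varepsilon(\mathbf u_h)\|_0^2\le\tfrac1\lambda\|\phi\|_0^2$.

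Finally I would bound the displacement. Since $\mathbf u_h=\mathbf u_h^\bot$ and $b(q_h,\mathbf u_h)=\tfrac1\lambda(p_h,q_h)$ for all $q_h\in Q_h$, the inf-sup condition~\eqref{eq:inf-sup} — in the form of a bounded right inverse of the discrete divergence on $\mathbf V_h^\bot$, whose norm is $h$-independent precisely because $\beta$ and Korn's constant are — gives $|\mathbf u_h|_1\le\tfrac{c}{\beta\lambda}\|p_h\|_0\le\tfrac{c}{\beta\lambda}\|\phi\|_0$, hence $\|\mathbf u_h\|_1\le c\lambda^{-1}\|\phi\|_0$ by Poincaré, which is already gradient robustness in the sense of Definition~\ref{def:gradientrobust}. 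To upgrade $\lambda^{-1}$ to $(\lambda+\mu)^{-1}$ I would use the shifted pressure: since the reformulated first equation reads $b(p_h+\pi^{L^2}\phi,\mathbf v_h)=-a(\mathbf u_h,\mathbf v_h)$ for all $\mathbf v_h\in\mathbf V_h$, \eqref{eq:inf-sup} yields $\|p_h+\pi^{L^2}\phi\|_0\le\tfrac{2\mu}{\beta}\|\varepsilon(\mathbf u_h)\|_0$; feeding this — together with $\|p_h\|_0\le\|\phi\|_0$ and the energy bound above — back into $|\mathbf u_h|_1\le\tfrac{c}{\beta\lambda}\|p_h\|_0$ and a short bootstrap on $\|p_h\|_0$ closes the argument and delivers~\eqref{eq:thm4_error}.

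The delicate point is this last step: the inf-sup estimate on its own only gives $\lambda^{-1}$ decay and the energy estimate on its own only $(\mu\lambda)^{-1/2}$, so the clean $(\lambda+\mu)^{-1}$ bound is obtained by balancing the two against each other through $p_h+\pi^{L^2}\phi$ while carefully tracking the $\mu$-dependence of all constants; everything preceding it — the integration-by-parts/commuting-diagram rewriting of the right hand side and the vanishing of $\mathbf u_h^0$ — is essentially dictated by~\eqref{eq:modified_weak_form} together with the assumed properties~\eqref{eq:comm_prop}, \eqref{eq:compatibility_divergence} and~\eqref{eq:compatibility_boundarytrace} of $\mathbf{\pi}^{\rm{div}}$. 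One must also ensure that $\beta$ and Korn's constant, hence the final constant $c$, stay bounded away from $0$ as $h\to0$, which follows from the shape regularity of $\{\mathcal T_h\}$ and the standing inf-sup assumption.
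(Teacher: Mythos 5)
Your proposal follows the paper's proof almost step for step through its main body: the integration by parts of $(\nabla\phi,\mathbf{\pi}^{\rm{div}}\mathbf{v}_h)$ using~\eqref{eq:compatibility_boundarytrace}, the replacement of $\phi$ by $\pi^{L^2}\phi$ and the use of~\eqref{eq:comm_prop}, the observation that $\mathbf{u}_h\in\mathbf{V}_h^\bot$ (the paper tests with $\mathbf{v}_h\in\mathbf{V}_h^0$ and uses~\eqref{eq:compatibility_divergence}; your $a$-orthogonality argument is the same thing), the energy identity $2\mu\|\varepsilon(\mathbf{u}_h)\|_0^2+\tfrac1\lambda\|p_h\|_0^2=-\tfrac1\lambda(\pi^{L^2}\phi,p_h)$ (identical to the paper's~\eqref{eq:thm_gradient_robust_eq6} after substituting $p_h=\lambda\,\pi^{L^2}\nabla\cdot\mathbf{u}_h$ from the second equation), and the inf-sup bound $\|\mathbf{u}_h\|_1\le\tfrac{c}{\beta}\|\pi^{L^2}\nabla\cdot\mathbf{u}_h\|_0$ on $\mathbf{V}_h^\bot$. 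All of that is correct and yields $\|\mathbf{u}_h\|_1\le c\lambda^{-1}\|\phi\|_0$, which indeed already establishes gradient robustness in the sense of Definition~\ref{def:gradientrobust}.

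The gap is the final upgrade to $(\lambda+\mu)^{-1}$, which you yourself flag as the delicate point but do not actually carry out, and the route you sketch does not close. The shifted-pressure estimate $\|p_h+\pi^{L^2}\phi\|_0\le\tfrac{2\mu}{\beta}\|\varepsilon(\mathbf{u}_h)\|_0$ bounds the pressure defect \emph{from above} by $\mu\|\varepsilon(\mathbf{u}_h)\|_0$; feeding it back through $\|p_h\|_0\le\|\phi\|_0+\tfrac{2\mu}{\beta}\|\varepsilon(\mathbf{u}_h)\|_0$ into $\|\mathbf{u}_h\|_1\le\tfrac{c}{\beta\lambda}\|p_h\|_0$ produces a term $\tfrac{c\mu}{\lambda}\|\mathbf{u}_h\|_1$ that can only be absorbed when $\mu\lesssim\lambda$, i.e.\ precisely when the upgrade is vacuous. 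Moreover, your diagnosis that ``the energy estimate on its own only gives $(\mu\lambda)^{-1/2}$'' is an artifact of having prematurely inserted $\|p_h\|_0\le\|\phi\|_0$ into it: keeping the right-hand side linear, $2\mu\|\varepsilon(\mathbf{u}_h)\|_0^2\le\tfrac1\lambda\|\phi\|_0\|p_h\|_0=\|\phi\|_0\|\pi^{L^2}\nabla\cdot\mathbf{u}_h\|_0\le c\|\phi\|_0\|\mathbf{u}_h\|_1$, so Korn gives $\|\mathbf{u}_h\|_1\le\tfrac{c}{\mu}\|\phi\|_0$ outright, and $\min(\mu^{-1},\lambda^{-1})\le 2(\mu+\lambda)^{-1}$ finishes. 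The paper does this in one stroke: it adds the Korn bound $\mu\|\mathbf{u}_h\|_1^2\le c\mu\|\varepsilon(\mathbf{u}_h)\|_0^2$ to the inf-sup bound $\lambda\|\mathbf{u}_h\|_1^2\le\tfrac{c\lambda}{\beta^2}\|\pi^{L^2}\nabla\cdot\mathbf{u}_h\|_0^2$, so that $(\mu+\lambda)\|\mathbf{u}_h\|_1^2$ is controlled by the left-hand side of the energy identity and hence by $c\|\phi\|_0\|\mathbf{u}_h\|_1$. You have every ingredient; you only need to replace the proposed bootstrap by this combination.
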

\begin{proof} 
  Consider $\mathbf{v}_h = \mathbf{u}_h$ in
  equation~\eqref{eq:modified_weak_form} with $\mathbf{f} = \nabla
  \phi$. Then integration by parts for the right hand side, using
  the zero trace from~\eqref{eq:compatibility_boundarytrace}, we get
  \begin{equation}\label{eq:thm_gradient_robust_eq1}
    a(\mathbf{u}_h, \mathbf{u}_h) + b(p_h, \mathbf{u}_h) 
    = - ( \phi, \nabla \cdot \mathbf{\pi}^{\rm{div}} \mathbf{u}_h ). 
  \end{equation}
  Since $\nabla \cdot \mathbf{\pi}^{\rm{div}} \mathbf{u}_h \in Q_h$
  we can rewrite the right hand side as
  \begin{equation}\label{eq:thm_gradient_robust_eq2}
    (\phi, \nabla \cdot \mathbf{\pi}^{\rm{div}} \mathbf{u}_h)  = (\pi^{L^2} \phi, \nabla \cdot \mathbf{\pi}^{\rm{div}} \mathbf{u}_h). 
  \end{equation}
  Since $\pi^{L^2} \nabla \cdot \mathbf{u}_h \in Q_h$, we can
  use it to test the second line in~\eqref{eq:modified_weak_form} giving
  \begin{equation}\label{eq:nearly_incompress_q}
    \begin{aligned}
      (\pi^{L^2} \nabla \cdot \mathbf{u}_h , \pi^{L^2} \nabla \cdot \mathbf{u}_h) & 
      = (\pi^{L^2} \nabla \cdot \mathbf{u}_h , \nabla \cdot \mathbf{u}_h ) \\ 
      &=\frac{1}{\lambda} (p_h, \pi^{L^2}\nabla \cdot
      \mathbf{u}_h)\\
      &=\frac{1}{\lambda} (p_h, \nabla \cdot
      \mathbf{u}_h)\\
      &=\frac{1}{\lambda} b(p_h,\mathbf{u}_h).
    \end{aligned}
  \end{equation}
  Substituting~\eqref{eq:thm_gradient_robust_eq2}  and~\eqref{eq:nearly_incompress_q} in~\eqref{eq:thm_gradient_robust_eq1}, we get
  \begin{equation}
    a(\mathbf{u}_h, \mathbf{u}_h) + \lambda(\pi^{L^2} \nabla \cdot \mathbf{u}_h , \pi^{L^2} \nabla \cdot \mathbf{u}_h ) = -(\pi^{L^2} \phi, \nabla \cdot \mathbf{\pi}^{\rm{div}} \mathbf{u}_h). \label{eq:thm_gradient_robust_eq3}
  \end{equation}
  Now $\pi^{L^2} \phi \in Q_h$ and $\mathbf{u}_h \in \mathbf{V}_h$
  hence, by~\eqref{eq:comm_prop}, it holds
  \begin{equation*}
    (\pi^{L^2} \phi, \nabla \cdot \mathbf{\pi}^{\rm{div}} \mathbf{u}_h) = ( \pi^{L^2} \phi, \nabla \cdot \mathbf{u}_h) . 
  \end{equation*}
  Filling this into~\eqref{eq:thm_gradient_robust_eq3} gives
  \begin{equation}
    \begin{aligned}
      2 \mu \left( \varepsilon(\mathbf{u}_h), \varepsilon(\mathbf{u}_h)\right) + \lambda\left(\pi^{L^2} \nabla \cdot \mathbf{u}_h , \pi^{L^2} \nabla \cdot \mathbf{u}_h \right)
      & = -  \left(\pi^{L^2}\phi,\pi^{L^2} \nabla \cdot \mathbf{u}_h\right).  \label{eq:thm_gradient_robust_eq5}
    \end{aligned}
  \end{equation}
  Using Cauchy-Schwartz inequality, we get
  \begin{equation}
    2\mu\|\varepsilon(\mathbf{u}_h)\|^2_0 + \lambda\|\pi^{L^2}
    \nabla \cdot \mathbf{u}_h\|_0^2 \le \|\pi^{L^2} \phi\|_0
    \|\pi^{L^2}\nabla \cdot \mathbf{u}_h\|_0
    \le  \|\phi\|_0\|\pi^{L^2}\nabla \cdot \mathbf{u}_h\|_0.\label{eq:thm_gradient_robust_eq6}
  \end{equation}

  Now, to estimate the $H^1$-norm of $\mathbf{u}_h$, we notice that
  by the choice of $\mathbf{f}$
  and~\eqref{eq:compatibility_divergence}, testing the first
  equation in~\eqref{eq:modified_weak_form} with a function
  $\mathbf{v}_h \in \mathbf{V}_h^0$ yields
  \[
  a(\mathbf{u}_h,\mathbf{v_h}) =
  -b(p_h,\mathbf{v}_h)-(\phi,\nabla\cdot
  \mathbf{\pi}^{\rm{div}}\mathbf{v}_h) = 0
  \]
  and thus $\mathbf{u}_h \in \mathbf{V}_h^\bot$. Hence by,
  e.g.,~\cite[Lemma~3.58]{John:2016} it holds
  \begin{equation}\label{eq:thm_gradient_robust_eq7}
    \|\mathbf{u}_h\|_1 \le \frac{c}{\beta}\|\pi^{L^2}\nabla \cdot \mathbf{u}_h\|_0
  \end{equation}
  with the inf-sup constant $\beta$ from~\eqref{eq:inf-sup},
  since $\pi^{L^2}\nabla \cdot \mathbf{u}_h \in Q_h$.
  
  Using Korn's inequality,~\eqref{eq:thm_gradient_robust_eq6}, and~\eqref{eq:thm_gradient_robust_eq7}, we get 
  \begin{equation}
    \begin{aligned}
      (\mu+\lambda) \|\mathbf{u}_h\|^2_1 &\le
      c\mu\|\varepsilon(\mathbf{u}_h)\|_0^2 + \frac{\lambda
        c}{\beta}\|\pi^{L^2}\nabla \cdot \mathbf{u}_h\|_1^2 \\
      &\le c\|\phi\|_0\|\mathbf{u}_h\|_1,
    \end{aligned}
  \end{equation}
  and thus the assertion is shown.
\end{proof}
\begin{theorem}\label{thm:gradient_robust_error}
  Let Assumptions~\ref{ass:fespace} and~\ref{ass:1} be satisfied.
  Then the solutions $(\mathbf{u}, p) \in \mathbf{V} \times Q$,
  of the problem~\eqref{eq:elasticity_mixed} and
  $(\mathbf{u}_h, p_h) \in \mathbf{V}_h \times Q_h$
  of~\eqref{eq:modified_weak_form} satisfy the error estimate
  \begin{equation}\label{eq:main_ineq}
    \|\mathbf{u} - \mathbf{u}_h\|_1 \le c\, h^k \left(1 + \sqrt{\frac{\mu}{\lambda}}\right)\|\mathbf{u}\|_{k+1} +
    c\frac{h^k}{\mu\lambda}\|p\|_k,
  \end{equation}
  provided the regularity $(\mathbf{u}, p) \in
  H^{k+1}(\Omega;\mathbb{R}^d)\times H^k(\Omega)$
  is given.
\end{theorem}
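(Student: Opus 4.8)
The plan is to recast the estimate as a Galerkin-type argument: compare $\mathbf{u}_h$ with a divergence-compatible interpolant of $\mathbf{u}$, use the commuting property~\eqref{eq:comm_prop} to eliminate the bulk of the pressure, control the remaining reconstruction mismatch by the consistency bound of Lemma~\ref{lem:consistency}, and then exploit that the pressure-coupling term comes out with the ``right'' sign --- the discrete counterpart of $\mathbf{u}^0=0$ in Lemma~\ref{lem:u0}.

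First I would derive an error equation. Testing the momentum equation of~\eqref{eq:elasticity_mixed} with $\mathbf{\pi}^{\rm{div}}\mathbf{w}_h$, $\mathbf{w}_h\in\mathbf{V}_h$, integrating the pressure term by parts and discarding the boundary term by~\eqref{eq:compatibility_boundarytrace} gives
\[
-2\mu(\nabla\cdot\varepsilon(\mathbf{u}),\mathbf{\pi}^{\rm{div}}\mathbf{w}_h)+(p,\nabla\cdot\mathbf{\pi}^{\rm{div}}\mathbf{w}_h)=(\mathbf{f},\mathbf{\pi}^{\rm{div}}\mathbf{w}_h).
\]
Since $\nabla\cdot\mathbf{\pi}^{\rm{div}}\mathbf{w}_h\in Q_h$ one may replace $p$ by $\pi^{L^2}p$, and then~\eqref{eq:comm_prop} turns the second summand into $b(\pi^{L^2}p,\mathbf{w}_h)$. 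Writing $-2\mu(\nabla\cdot\varepsilon(\mathbf{u}),\mathbf{\pi}^{\rm{div}}\mathbf{w}_h)=a(\mathbf{u},\mathbf{w}_h)-\mathcal{R}(\mathbf{w}_h)$ with $\mathcal{R}(\mathbf{w}_h):=2\mu\bigl[(\nabla\cdot\varepsilon(\mathbf{u}),\mathbf{\pi}^{\rm{div}}\mathbf{w}_h)+(\varepsilon(\mathbf{u}),\varepsilon(\mathbf{w}_h))\bigr]$ and subtracting the first line of~\eqref{eq:modified_weak_form}, I obtain
\[
a(\mathbf{u}-\mathbf{u}_h,\mathbf{w}_h)+b(\pi^{L^2}p-p_h,\mathbf{w}_h)=\mathcal{R}(\mathbf{w}_h)\qquad\forall\,\mathbf{w}_h\in\mathbf{V}_h,
\]
and the estimate of Lemma~\ref{lem:consistency} --- whose proof nowhere uses $\lambda=\infty$ --- gives $|\mathcal{R}(\mathbf{w}_h)|\le c\mu h^k\|\mathbf{u}\|_{k+1}\lvert\mathbf{w}_h\rvert_1$. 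Subtracting the second lines of~\eqref{eq:elasticity_mixed} and~\eqref{eq:modified_weak_form} and using $\nabla\cdot\mathbf{u}=p/\lambda$ gives the companion identity $b(q_h,\mathbf{u}_h-\mathbf{u})=\tfrac1\lambda(p_h-\pi^{L^2}p,q_h)$ for all $q_h\in Q_h$.

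Next I would introduce a Fortin-type interpolant $\mathbf{v}_h\in\mathbf{V}_h$ with $b(q_h,\mathbf{u}-\mathbf{v}_h)=0$ for all $q_h\in Q_h$ and $\|\mathbf{u}-\mathbf{v}_h\|_1\le ch^k\|\mathbf{u}\|_{k+1}$; this is standard from the inf-sup condition~\eqref{eq:inf-sup} and $I_h$ by taking $\mathbf{v}_h=I_h\mathbf{u}+\mathbf{y}_h$, where the inf-sup condition supplies $\mathbf{y}_h$ correcting the discrete divergence with $\|\mathbf{y}_h\|_1\le\beta^{-1}\|\nabla\cdot(\mathbf{u}-I_h\mathbf{u})\|_0\le ch^k\|\mathbf{u}\|_{k+1}$. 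Setting $\mathbf{e}_h:=\mathbf{v}_h-\mathbf{u}_h$, the crucial observation is that the pressure-coupling term collapses: by compatibility of $\mathbf{v}_h$, $b(\pi^{L^2}p-p_h,\mathbf{v}_h)=b(\pi^{L^2}p-p_h,\mathbf{u})=\tfrac1\lambda(\pi^{L^2}p-p_h,\pi^{L^2}p)$, while the companion identity with $q_h=\pi^{L^2}p-p_h$ gives $b(\pi^{L^2}p-p_h,\mathbf{u}_h)=\tfrac1\lambda(\pi^{L^2}p-p_h,\pi^{L^2}p)-\tfrac1\lambda\|p_h-\pi^{L^2}p\|_0^2$, so that $b(\pi^{L^2}p-p_h,\mathbf{e}_h)=\tfrac1\lambda\|p_h-\pi^{L^2}p\|_0^2\ge0$.

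Finally I would close the estimate by energy testing. Splitting $a(\mathbf{e}_h,\mathbf{e}_h)=a(\mathbf{v}_h-\mathbf{u},\mathbf{e}_h)+a(\mathbf{u}-\mathbf{u}_h,\mathbf{e}_h)$ and inserting the error equation at $\mathbf{w}_h=\mathbf{e}_h$ together with the sign identity above,
\[
a(\mathbf{e}_h,\mathbf{e}_h)+\tfrac1\lambda\|p_h-\pi^{L^2}p\|_0^2=a(\mathbf{v}_h-\mathbf{u},\mathbf{e}_h)+\mathcal{R}(\mathbf{e}_h)\le c\mu h^k\|\mathbf{u}\|_{k+1}\,\|\mathbf{e}_h\|_1,
\]
by Cauchy-Schwarz on the first term and the bound on $\mathcal{R}$ on the second. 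Korn's inequality bounds the left-hand side from below by $c\mu\|\mathbf{e}_h\|_1^2$, so $\|\mathbf{e}_h\|_1\le ch^k\|\mathbf{u}\|_{k+1}$, and the triangle inequality yields $\|\mathbf{u}-\mathbf{u}_h\|_1\le ch^k\|\mathbf{u}\|_{k+1}$, which in particular implies~\eqref{eq:main_ineq} since $1\le1+\sqrt{\mu/\lambda}$ and $\tfrac{h^k}{\mu\lambda}\|p\|_k$ is nonnegative. The main obstacle is really one insight, hidden in the telescoping of the pressure-coupling term above: once the reconstruction and~\eqref{eq:comm_prop} are used, that term at $\mathbf{e}_h$ is $+\tfrac1\lambda\|p_h-\pi^{L^2}p\|_0^2\ge0$ rather than producing the pressure-dependent contribution $\tfrac1\mu\bigl(\tfrac1\lambda+1\bigr)\inf_{q_h}\|p-q_h\|_0$ that appears in~\eqref{eq:disp_error} --- this is exactly the mechanism that makes the scheme gradient robust, while the commuting-diagram bookkeeping, the Fortin correction (where the inf-sup constant enters) and Korn's inequality are routine. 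If one instead wishes to reproduce~\eqref{eq:main_ineq} with the pressure dependence kept explicit, one can avoid the Fortin operator, use the $a$-orthogonal decomposition $\mathbf{u}_h=\mathbf{u}_h^0+\mathbf{u}_h^\bot$, estimate $\mathbf{u}_h^\bot$ through $\|\pi^{L^2}\nabla\cdot\mathbf{u}_h\|_0$ as in the proof of Theorem~\ref{thm:gradient_robust} and $\mathbf{u}_h^0$ through Lemma~\ref{lem:consistency}, and balance the cross terms by Young's inequality; that is where the explicit factors $\sqrt{\mu/\lambda}$ and $(\mu\lambda)^{-1}\|p\|_k$ in~\eqref{eq:main_ineq} would originate.
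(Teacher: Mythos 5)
Your proof is correct, but it follows a genuinely different (and in fact sharper) route than the paper's. The paper takes $\mathbf{v}_h$ to be the full elasticity (discrete Stokes) projection of $\mathbf{u}$, derives coupled error equations for $(\mathbf{w}_h,r_h)=(\mathbf{u}_h-\mathbf{v}_h,p_h-\pi^{L^2}p)$, tests with the pair $(\mathbf{w}_h,r_h)$, bounds the term $\tfrac1\lambda(p-\pi^{L^2}p,r_h)$ by Young's inequality, and then splits $\mathbf{w}_h=\mathbf{w}_h^0+\mathbf{w}_h^\bot$, estimating $\mathbf{w}_h^0$ via Lemma~\ref{lem:consistency} and $\mathbf{w}_h^\bot$ via the inf-sup bound on $\pi^{L^2}\nabla\cdot\mathbf{w}_h$; this is where the factors $\sqrt{\mu/\lambda}$ and the $\lambda^{-1}\|p\|_k$ term originate. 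You instead need only the divergence-compatibility of $\mathbf{v}_h$ (a Fortin interpolant; the paper's elasticity projection happens to satisfy the same property, so its extra $a$-orthogonality is not essential), and you observe that the pressure-coupling term has a sign: $b(\pi^{L^2}p-p_h,\mathbf{e}_h)=\tfrac1\lambda\|\pi^{L^2}p-p_h\|_0^2\ge0$. Implicit in this identity is the exact orthogonality $(p-\pi^{L^2}p,q_h)=0$ for $q_h\in Q_h$, which the paper's proof gives away by estimating that term with Young's inequality instead of noting it vanishes. A single energy test then yields the pressure-free, $\lambda$-uniform bound $\|\mathbf{u}-\mathbf{u}_h\|_1\le ch^k\|\mathbf{u}\|_{k+1}$, which implies~\eqref{eq:main_ineq} since the omitted terms are nonnegative --- so your argument actually proves a stronger statement, the natural nearly-incompressible analogue of Theorem~\ref{thm:thm1_infty}. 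Two small points to make explicit: the substitution $(p,\nabla\cdot\mathbf{\pi}^{\rm{div}}\mathbf{w}_h)=(\pi^{L^2}p,\nabla\cdot\mathbf{\pi}^{\rm{div}}\mathbf{w}_h)$ requires $\nabla\cdot\mathcal{M}_h\subset Q_h$, which the paper also uses without listing it among Assumptions~\ref{ass:fespace}--\ref{ass:1}; and you correctly note that Lemma~\ref{lem:consistency} is a statement about $\mathbf{u}\in H^{k+1}$ alone, so its hypothesis $\lambda=\infty$ is vacuous --- the paper itself reuses the lemma in the nearly incompressible proof.
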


\begin{proof}
  As in the proof of Theorem~\ref{thm:thm1_infty}, we could split the error
  \[
  (\mathbf{u}-\mathbf{u}_h,p-p_h) =
  (\mathbf{u}-\mathbf{v}_h,p-q_h)  + (\mathbf{v}_h-\mathbf{u}_h,q_h-p_h) 
  \]
  with arbitrary $\mathbf{v}_h \in \mathbf{V}_h$ and $q_h \in
  Q_h$. However, as it will turn out to be useful, we will select
  $q_h = \pi^{L^2} p$ and $\mathbf{v}_h$ as the elasticity projection of $\mathbf{u}$,
  i.e., satisfying the following equation
  \begin{align}\label{eq:elasticity_projection}
    \left( \varepsilon(\mathbf{v}_h), \varepsilon(\boldsymbol{\varphi}_h)\right) + 
    b(\tilde{p}_h, \boldsymbol{\varphi}_h) 
    & = \left(\varepsilon(\mathbf{u}), \varepsilon(\boldsymbol{\varphi}_h)\right) 
    && \forall \boldsymbol{\varphi}_h \in \mathbf{V}_h, \\
    b(s_h,\mathbf{v}_h) & = b(s_h, \mathbf{u}) && \forall s_h \in Q_h. \nonumber
  \end{align}
  Clearly, the solution to the continuous counterpart is
  $(\mathbf{v},\tilde{p}) = (\mathbf{u},0)$.  
  Since the above equation is uniquely solvable, see,
  e.g.~\cite[Theorem~4.2.3]{BoffiBrezziFortin:2013}, we have the
  orthogonality $b(\mathbf{\pi}^{L^2} p - p_h, \mathbf{u} - \mathbf{v}_h)  = 0$ 
  and the approximation error satisfies, e.g.,~\cite[Theorem~5.2.2]{BoffiBrezziFortin:2013}.
  \begin{equation}
    \|\mathbf{u} - \mathbf{v}_h\|_1 + \|\tilde{p} - \tilde{p}_h\|
    \le  c\inf\limits_{\boldsymbol{\varphi}_h \in \mathbf{V}_h}
    \|\mathbf{u} - \boldsymbol{\varphi}_h\|_1  + c\inf\limits_{s_h \in Q_h}\|0- q_h\|,
  \end{equation}
  which gives 
  \begin{equation}
    \|\mathbf{u} - \mathbf{v}_h\|_1 \le c\inf\limits_{\mathbf{\varphi}_h \in \mathbf{V}_h }
    \|\mathbf{u} - \mathbf{\varphi}_h\|_1 \label{eq:main_thm_u_norm}
  \end{equation}
  
  Due to the interpolation estimates in
  Assumption~\ref{ass:fespace}, we are left with bounding 
  $\mathbf{w}_h  = \mathbf{u}_h - \mathbf{v}_h \in \mathbf{V}_h$ and
  $r_h = p_h - q_h \in Q_h$. We split $\mathbf{w}_h = \mathbf{w}_h^0
  + \mathbf{w}_h^\bot \in \mathbf{V}_h^0 \oplus \mathbf{V}_h^\bot$.
  By definition of the bilinear forms $a$ and $b$,
  i.e.,~\eqref{eq:bilinear_a} and~\eqref{eq:bilinear_b}, and the
  first line in~\eqref{eq:modified_weak_form} and~\eqref{eq:elasticity_mixed},
  the remainder $\mathbf{w}_h$ and $r_h$ satisfy, for any discrete
  function $ \boldsymbol{\varphi}_h \in \mathbf{V}_h$,
  \begin{equation}\label{eq:thm5_proof1}
    \begin{aligned}
      a(\mathbf{w}_h, \boldsymbol{\varphi}_h) &+ b(r_h,\boldsymbol{\varphi}_h)  
      = a(\mathbf{u}_h - \mathbf{v}_h, \boldsymbol{\varphi}_h) + b(p_h - q_h,\boldsymbol{\varphi}_h) \\
      & = (\mathbf{f}, \mathbf{\pi}^{\rm{div}}\boldsymbol{\varphi}_h -  \boldsymbol{\varphi}_h) +
      a(\mathbf{u} - \mathbf{v}_h, \boldsymbol{\varphi}_h) + b(p - q_h, \boldsymbol{\varphi}_h).
    \end{aligned}
  \end{equation}
  Analogously, from the second line
  in~\eqref{eq:modified_weak_form} and~\eqref{eq:elasticity_mixed},
  we get for arbitrary $s_h \in Q_h$
  \begin{equation}\label{eq:gradient_robust_error_proof2}
    \begin{aligned}
      b(s_h,\mathbf{w}_h) - \frac{1}{\lambda} ( r_h, s_h) & = b(s_h,\mathbf{u}_h - \mathbf{v}_h) - \frac{1}{\lambda}(p_h - q_h, s_h) \\
      & = b(s_h,\mathbf{u}_h) - \frac{1}{\lambda}(p_h, s_h) - \bigl(b(s_h,\mathbf{v}_h) - \frac{1}{\lambda}(q_h, s_h)\bigr) \\
      & = b(s_h,\mathbf{u} - \mathbf{v}_h) - \frac{1}{\lambda}(p - q_h, s_h). 
    \end{aligned}
  \end{equation}
  Testing~\eqref{eq:thm5_proof1}
  and~\eqref{eq:gradient_robust_error_proof2}
  with $\mathbf{\varphi}_h=\mathbf{w_h}$ and $s_h=r_h$ we get
  \begin{equation} \label{eq:gradient_robust_error_proof3}
    \begin{aligned}
      c\mu\|\mathbf{w}_h\|^2_1 + \frac{1}{\lambda} \|r_h\|^2
      &\le a(\mathbf{w}_h, \mathbf{w}_h) + \frac{1}{\lambda} (r_h, r_h)\\        
      &= a(\mathbf{w}_h, \mathbf{w}_h) + b(r_h,\mathbf{w}_h) - b(r_h,\mathbf{w}_h)
      + \frac{1}{\lambda}(r_h, r_h) \\
      & = (\mathbf{f}, \mathbf{\pi}^{\rm{div}}\mathbf{w}_h -
      \mathbf{w}_h)
      + a(\mathbf{u} - \mathbf{v}_h, \mathbf{w}_h) \\
      & \;\;\;\; +b(p - q_h,\mathbf{w}_h) - b(r_h,\mathbf{u} - \mathbf{v}_h)
      - \frac{1}{\lambda}(p - q_h, r_h).
    \end{aligned}
  \end{equation}
  Using~\eqref{eq:lemma2} and~\eqref{eq:elasticity_mixed},
  we obtain a bound on
  $(\mathbf{f}, \mathbf{\pi}^{\rm{div}}\mathbf{w}_h - \mathbf{w}_h)$
  as follows
  \begin{equation*}
    \begin{aligned}
      ( \mathbf{f}, \mathbf{\pi}^{\rm{div}}\mathbf{w}_h &- \mathbf{w}_h)   = -2\mu(\nabla \cdot \varepsilon(\mathbf{u}), \mathbf{\pi}^{\rm{div}}\mathbf{w}_h -\mathbf{w}_h) - 
      (\nabla p , \mathbf{\pi}^{\rm{div}}\mathbf{w}_h - \mathbf{w}_h) \\
      & = -2\mu(\nabla \cdot \varepsilon(\mathbf{u}),
      \mathbf{\pi}^{\rm{div}}\mathbf{w}_h) -
      2\mu\left(\varepsilon(\mathbf{u}),
      \varepsilon(\mathbf{w}_h)\right) + b(p,\mathbf{\pi}^{\rm{div}}\mathbf{w}_h - \mathbf{w}_h) \\
      & \le c \sum\limits_{T \in \mathcal{T}_h}
      h_T^k\lvert\mathbf{u}\rvert_{k+1,T}
      \lvert\mathbf{w}_h\rvert_{1,T}
      + b(p,\mathbf{\pi}^{\rm{div}}\mathbf{w}_h - \mathbf{w}_h) \\
      & \le 2\mu c \left( \sum\limits_{T \in \mathcal{T}_h}
      h^{2k}_T\lvert\mathbf{u}\rvert^2_{k+1,T}\right)^{\frac{1}{2}}\lvert\mathbf{w}_h\rvert_1
      + b(p,\mathbf{\pi}^{\rm{div}}\mathbf{w}_h - \mathbf{w}_h). 
    \end{aligned}
  \end{equation*}
  Substituting this in~\eqref{eq:gradient_robust_error_proof3}, we get
  \begin{equation}\label{eq:gradient_robust_error_proof5}
    \begin{aligned}
      c\mu\|\mathbf{w}_h\|^2_1 &+ \frac{1}{\lambda} \|r_h\|^2 
      \le 2\mu c \left( \sum\limits_{T \in \mathcal{T}_h}  h^{2k}_T\lvert\mathbf{u}\rvert^2_{k+1,T}\right)^{\frac{1}{2}}\lvert\mathbf{w}_h\rvert_1  \\
      &+ \Bigl(b(p,\mathbf{\pi}^{\rm{div}}\mathbf{w}_h - \mathbf{w}_h) + b(p - q_h,\mathbf{w}_h) - b(r_h,\mathbf{u} - \mathbf{v}_h)\Bigr) \\
      &+\Bigl(a(\mathbf{u}- \mathbf{v}_h, \mathbf{w}_h) - \frac{1}{\lambda}(p - q_h, r_h)\Bigr). 
    \end{aligned}
  \end{equation}
  The last line can be estimated as
  \begin{equation*}
    a(\mathbf{u}- \mathbf{v}_h, \mathbf{w}_h) - \frac{1}{\lambda}(p - q_h, r_h) \le \frac{c\mu}{2}\|\mathbf{u} - \mathbf{v}_h\|_1^2  + 
    \frac{c\mu}{2} \|\mathbf{w}_h\|^2_1 + \frac{1}{2\lambda}\|p - q_h\|^2 + 
    \frac{1}{2\lambda} \|r_h\|^2.
  \end{equation*}
  From~\eqref{eq:comm_prop},
  we have that $b(q_h,\mathbf{\pi}^{\rm{div}}\mathbf{w}_h -
  \mathbf{w}_h) = 0$.
  Hence the second line in~\eqref{eq:gradient_robust_error_proof5} becomes
  \begin{align*}
    b(p,\mathbf{\pi}^{\rm{div}}\mathbf{w}_h - \mathbf{w}_h) + & \, b(p - q_h,\mathbf{w}_h) - b(r_h,\mathbf{u} - \mathbf{v}_h)  \\
    & =  b(p - q_h,\mathbf{\pi}^{\rm{div}}\mathbf{w}_h - \mathbf{w}_h) + b(p-q_h,\mathbf{w}_h) - b(r_h,\mathbf{u} - \mathbf{v}_h) \\
    & =  b(p - q_h,\mathbf{\pi}^{\rm{div}}\mathbf{w}_h) -
    b(p_h - q_h,\mathbf{u} - \mathbf{v}_h)\\
    &= b(\pi^{L^2}p - q_h,\mathbf{\pi}^{\rm{div}}\mathbf{w}_h) -
    b(p_h - q_h,\mathbf{u} - \mathbf{v}_h)\\
    &= b(\pi^{L^2}p - q_h,\mathbf{w}_h) -
    b(p_h - q_h,\mathbf{u} - \mathbf{v}_h)\\
  \end{align*}
  where we used the properties of the $L^2$ projection $\pi^{L^2}$,
  the commutative diagram~\eqref{eq:comm_prop} and $\nabla
  \cdot \mathcal M_h \subset Q_h$.
  Now, we utilize the choice $q_h = \pi^{L^2} p$ to further simplify 
  the representation of the second line
  in~\eqref{eq:gradient_robust_error_proof5} to be
  \begin{align*}
    b(p,\mathbf{\pi}^{\rm{div}}\mathbf{w}_h - \mathbf{w}_h) + & \, b(p - q_h,\mathbf{w}_h) - b(r_h,\mathbf{u} - \mathbf{v}_h) \\
    & = b(\pi^{L^2}p - q_h,\mathbf{w}_h) - b(p_h - q_h,\mathbf{u}-\mathbf{u}_h) \\
    & = b(\pi^{L^2} p - p_h,\mathbf{u} - \mathbf{v}_h)\\
    &= 0
  \end{align*}
  by our choice of $\mathbf{v}_h$. This provides the bound
  \begin{equation}\label{eq:gradient_robust_error_proof_intermediate}
    \begin{aligned}
      \frac{c\mu}{2}\|\mathbf{w}_h\|^2_1 &+ \frac{1}{2\lambda} \|r_h\|^2 
      \le 2\mu c \left( \sum\limits_{T \in \mathcal{T}_h}  h^{2k}_T\lvert\mathbf{u}\rvert^2_{k+1,T}\right)^{\frac{1}{2}}\lvert\mathbf{w}_h\rvert_1  \\
      &+  \frac{c\mu}{2}\|\mathbf{u} - \mathbf{v}_h\|_1^2  + 
      \frac{1}{2\lambda}\|p - q_h\|^2.
    \end{aligned}
  \end{equation}

  Of course~\eqref{eq:gradient_robust_error_proof_intermediate} provides a
  bound on $\mathbf{w}_h$ but as it is suboptimal we continue by splitting 
  $\mathbf{w}_h = \mathbf{w}_h^0+\mathbf{w}_h^{\bot}$.
  
  We first bound $\|\mathbf{w}_h^0\|_1$.
  Consider $c\mu \|\mathbf{w}_h^0\|_1$ and using that
  $a(\mathbf{w}_h^\bot, \mathbf{w}_h^0) = 0$, we have,
  using~\eqref{eq:Vh0},~\eqref{eq:thm5_proof1}, and the choice of
  $\mathbf{v}_h$ as elasticity projection that 
  \begin{equation*}
    \begin{aligned}
      c\mu\|\mathbf{w}_h^0\|_1^2 & \le a(\mathbf{w}_h^0, \mathbf{w}_h^0) = a(\mathbf{w}_h, \mathbf{w}_h^0) = 
      a(\mathbf{w}_h, \mathbf{w}_h^0) + b(\mathbf{r}_h, \mathbf{w}_h^0) 	\\
      & = (f, \mathbf{\pi}^{\rm{div}}\boldsymbol{w}_h^0 -  \boldsymbol{w}_h^0)  \\
      & \le \left(-2\nabla\mu \cdot \varepsilon(\mathbf{u}) + \nabla p, 
      \mathbf{\pi}^{\rm{div}}\mathbf{w}_h^0 -  \mathbf{w}_h^0 \right)  \\
      & \le \left(-2\nabla\mu \cdot \varepsilon(\mathbf{u}) , 
      \mathbf{\pi}^{\rm{div}}\mathbf{w}_h^0 -  \mathbf{w}_h^0 \right) 
      + \left(\nabla p , \mathbf{\pi}^{\rm{div}}\mathbf{w}_h^0 -  \mathbf{w}_h^0 \right)\\
      & \le  \left(-2\nabla \cdot \mu\varepsilon(\mathbf{u}), 
      \mathbf{\pi}^{\rm{div}} \mathbf{w}_h^0\right) - 
      \mu \left(\varepsilon(\mathbf{u}), \varepsilon(\mathbf{w}_h^0)\right) \\
      & \le \mu \left(-2\nabla \cdot \varepsilon(\mathbf{u}), \mathbf{\pi}^{\rm{div}} 
      \mathbf{w}_h^0\right) - \mu \left(\varepsilon(\mathbf{u}), 
      \varepsilon(\mathbf{w}_h^0)\right). \\
    \end{aligned}
  \end{equation*}
  Thus, by Lemma~\ref{lem:consistency}, we conclude
  \begin{equation*}
    c\mu\|\mathbf{w}_h^0\|_1^2 \le \mu c \sum\limits_{T \in \mathcal{T}_h} h_T^k\|\mathbf{u}\|_{k+1,T} 
    \vert\mathbf{w}_h^0\vert_{1,T}
    \le c\mu h^k \|\mathbf{u}\|_{k+1} \|\mathbf{w}_h^0\|_1 \\
  \end{equation*}
  and hence
  \begin{equation}\label{eq:w0_bound}
    \|\mathbf{w}_h^0\|_1   \le ch^k \|\mathbf{u}\|_{k+1} .
  \end{equation}
  For $\|\mathbf{w}_h^\bot\|_1$, we utilize $\mathbf{w}_h^\bot \in
  \mathbf{V}_h^\bot$, i.e.,
  \[
  \left(\nabla \cdot \mathbf{w}_h, q_h\right) =
  \left(\nabla \cdot \mathbf{w}_h^\bot, q_h\right) \qquad \forall
  q_h \in Q_h
  \]
  meaning
  \[
  \mathbf{\pi}^{L^2} \nabla \cdot \mathbf{w}_h =
  \mathbf{\pi}^{L^2}\nabla \cdot \mathbf{w}_h^\bot.
  \]

  Using~\cite[Lemma~3.58]{John:2016}, we  get
  \begin{equation*}
    \begin{aligned}
      \|\mathbf{w}_h^\bot\|_1 & \le \frac{c}{\beta}
      \|\mathbf{\pi}^{L^2} \left(\nabla \cdot \mathbf{w}_h \right)\|_0 \\
      & \le \frac{c}{\beta} \|\mathbf{\pi}^{L^2} \nabla \cdot \mathbf{u}_h - 
      \mathbf{\pi}^{L^2}\nabla \cdot \mathbf{v}_h\|_0 \\
      & \le \frac{c}{\beta}\Bigl\|\frac{p_h}{\lambda}  - 
      \mathbf{\pi}^{L^2}\nabla \cdot
      \mathbf{v}_h\Bigr\|_0
    \end{aligned}
  \end{equation*}
  from the definition of $\mathbf{v}_h$ as elasticity projection.
  Hence, noting that $\nabla \cdot u = \frac{1}{\lambda} p$, we obtain
  \begin{equation*}
    \|\mathbf{w}_h^\bot\|_1 \le \frac{c}{\beta \lambda}\|p_h -
    q_h\|_0 = \frac{c}{\beta \lambda} \| r_h\|_0. 
  \end{equation*}

  With this, we conclude
  from~\eqref{eq:gradient_robust_error_proof_intermediate}
  \begin{equation*}
    \begin{aligned}
     \|r_h\|_0^2 & \le \lambda (\mu\|\mathbf{w}_h\|_1^2 + \frac{1}{\lambda}\|r_h\|_0^2) \\
     & \le c\mu\lambda h^{2k}\|\mathbf{u}\|^2_{k+1} + 
      c\|p - q_h\|_0^2 
    \end{aligned}
  \end{equation*}
  and thus   
 \begin{equation} \label{eq:wbot_bound}
   \begin{aligned}
     \|\mathbf{w}_h^\bot\|_1 &\le
     \frac{c}{\lambda} \|r_h\|_0\\
     & \le c\sqrt{\frac{\mu}{\lambda}}h^k\|\mathbf{u}\|_{k+1} + 
      \frac{c}{\lambda}\|p - q_h\|_0.
   \end{aligned}
 \end{equation}

  Now, we can bound $\|\mathbf{w}_h\|_1$ using \eqref{eq:w0_bound} and \eqref{eq:wbot_bound}
  \begin{equation}\label{eq:wh_bound}
    \begin{aligned}
      \|\mathbf{w}_h\|_1 & \le \|\mathbf{w}_h^0\|_1 + \|\mathbf{w}_h^\bot\|_1 \\
      & \le ch^k\|\mathbf{u}\|_{k+1} + \frac{c}{\beta\lambda}\|r_h\|_0 \\
      & \le ch^k\|\mathbf{u}\|_{k+1} + \frac{c}{\beta}\sqrt{\frac{\mu}{\lambda}}h^k\|\mathbf{u}\|_{k+1}
      + \frac{c}{\lambda}\|p -q_h\|_0\\
      &\le c\left(1 + \sqrt{\frac{\mu}{\lambda}}\right)h^k\|\mathbf{u}\|_{k+1} + 
      \frac{c}{\lambda}\|p - q_h\|_0.
    \end{aligned}
  \end{equation} 
  Finally, we arrive at the desired bound 
  \begin{equation}\label{eq:u-u_h_error}
    \begin{aligned}
      \|\mathbf{u} - \mathbf{u}_h\|_1 & \le \|\mathbf{u} - \mathbf{v}_h\|_1 + \|\mathbf{w}_h\|_1 \\
      & \le c\left(1 + \sqrt{\frac{\mu}{\lambda}}\right) h^k\|\mathbf{u}\|_{k+1}  + \frac{c}{\lambda}h^k\|p\|_k
    \end{aligned}
  \end{equation}
  by definition of $q_h$
  and Assumption~\ref{ass:fespace}.
\end{proof}

\section{Numerical Results}\label{sec:numerics}
For our computation, we use DOpElib~\cite{DOpElib} based
on the deal.II~\cite{dealII93} finite element library.
First, we present an example for incompressible materials.
\begin{example}\label{ex:ex2}
  For the first numerical example, we consider a small variation of Example $5.1$ in~\cite{Linke:2014}, 
  where the displacement and pressure is given as
  \begin{equation}\label{eq:ex1-solution}
    \mathbf{u}(x, y) = \begin{bmatrix}
      200 x^2 (1-x)^2y(1-y)(1-2y) \\
      -200 y^2(10y)^2x(1-x)(1-2x)
    \end{bmatrix}
  \end{equation}  
  \begin{equation}
    p(x,y) = -10\left(x - \frac{1}{2}\right)^3y^2 + (1-x)^3\left(y-\frac{1}{2}\right)^3 + \frac{1}{8}.
  \end{equation}
  for the incompressible linear elasticity equation
  \begin{equation}
    \begin{aligned}
      -2\mu \nabla  \cdot  \varepsilon(\mathbf{u}) + \nabla p = \mathbf{f}, \\
      \nabla \cdot \mathbf{u} = 0
    \end{aligned}
  \end{equation}
  with thus defined $\mathbf{f}$.
\end{example}
\begin{figure}
  \centering
  \includegraphics[width = 0.5\textwidth]{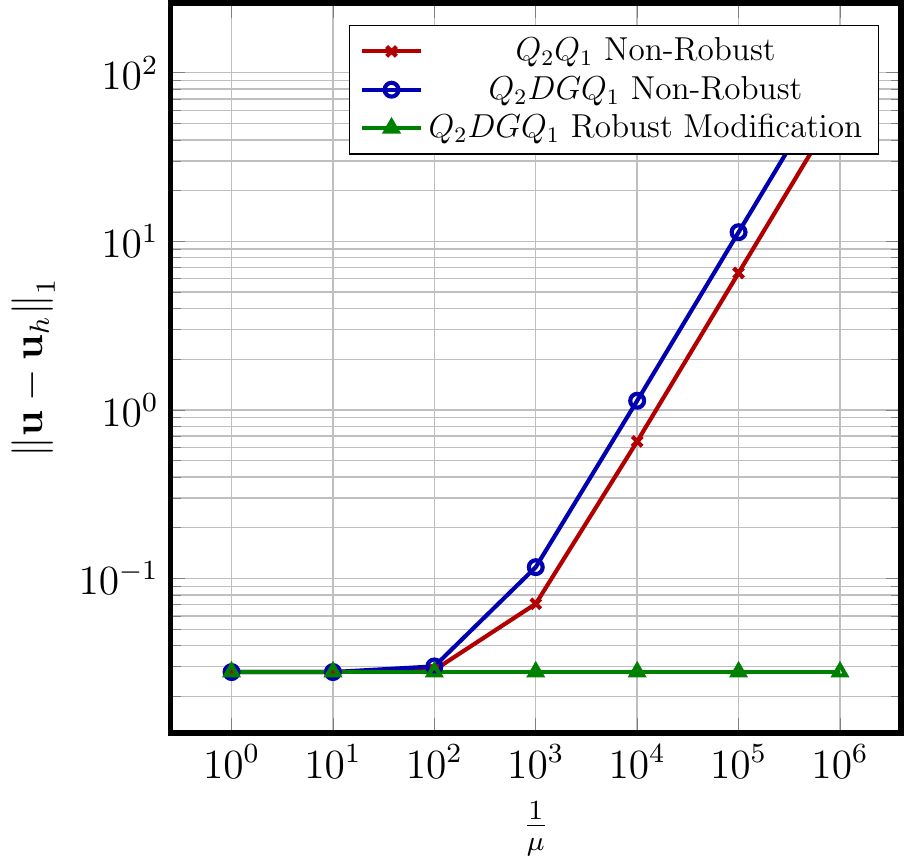}
  \caption{\small{Comparing displacement error in $H^1$ norm vs. $\frac{1}{\mu}$ for Example~\ref{ex:ex2} with and without 
      gradient robust modification for $h = 2^{-3}$} }\label{fig:ex2}
\end{figure}
Comparing~\eqref{eq:disp_error} with Figure~\ref{fig:ex2},
we notice that the $H^1$-norm displacement error without 
interpolation grows linearly w.r.t $\frac{1}{\mu}$ as predicted
due to the appearance of the 
pressure term $\frac{1}{\mu} \inf\limits_{q_h \in Q_h} \|p -
q_h\|_0$ in~\eqref{eq:disp_error}.
For the gradient robust modification employing the interpolation
onto the $\mathcal{BDM}$ finite element space, the error is
independent of $\mu$, highlighting the prediction of
Theorem~\ref{thm:thm1_infty}.

For future examples, we consider nearly incompressible materials given by equation~\eqref{eq:elasticity_mixed}.
\begin{example}\label{ex:ex3}
  For the second numerical example, we set the right hand side $f = \nabla \phi; \phi = x^6 + y^6$ in
  equation~\eqref{eq:elasticity_mixed}, as
  3  in~\cite[Example~2]{FuGuoshengLehrenfeldChristophLinkeStrechenbach:2020}.
\end{example}

From Lemma~\ref{lem:u0}, the solution for Example~\ref{ex:ex3} is given as
$\mathbf{u} = 0$ and $p = x^6 + y^6$. From
equation~\eqref{eq:thm4_error}, we have the bound
\[
\|\mathbf{u}_h\|_1 \le \frac{c}{\lambda + \mu} \| \phi\|_0
\]
on the discrete function for a gradient robust discretization.
For $\mu = 10^{-5}$, we have $\lambda + \mu \approx \lambda, \forall \lambda \geq 1$. Hence, we see
a green line with positive slope in Figure~\ref{fig:ex3_lambda} for
the gradient robust method, while the non robust method shows an
almost constant $\|\mathbf{u}_h\|_1 \ne 0$. However, for $\lambda = 10^{5}$
we have $\frac{1}{\lambda + \mu} \approx c$(constant) $\forall 0 < \mu \le 1$, which is seen in the flat green line 
in Figure~\ref{fig:ex3_mu}. \\
For non-gradient robust methods, we have 
\[
\|\mathbf{u}_h\|_1 \le
\frac{c}{\mu}\left(\frac{1}{\lambda} + 1\right)\|\phi\|_0
\]
from equation~\eqref{eq:disp_error}. For $\mu = 10^{-5}$, the term $\left( \frac{1}{\lambda} + 1\right) \to 1$ as $\lambda \to \infty$.
The same is shown by the flat red line in Figure~\ref{fig:ex3_lambda}. However, for $\lambda= 10^{-5}$, we have 
$\|\mathbf{u}\|_1 \le \frac{c}{\mu}$. Which is shown by the red line with negative slope in Figure~\ref{fig:ex3_mu}.

\begin{figure}
  \begin{minipage}{\textwidth}
    \begin{subfigure}{.5\textwidth}
      \centering
      \includegraphics[width = \textwidth]{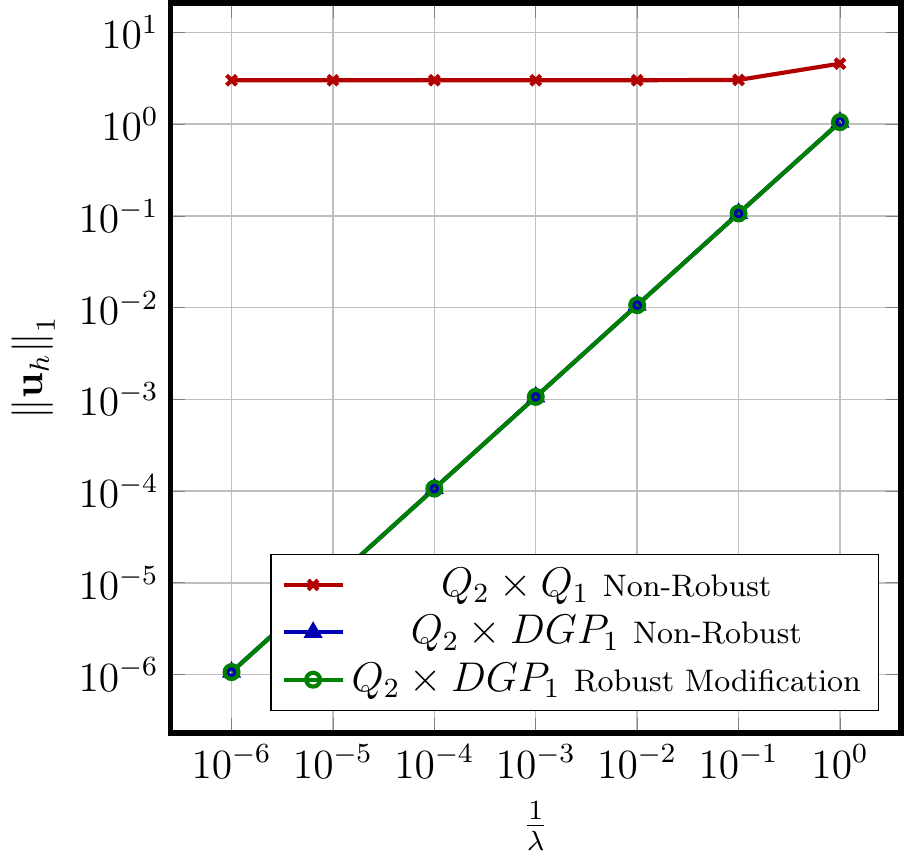}
      \caption{\small{$\|\mathbf{u}_h\|_1$  vs. $\frac{1}{\lambda}$ with $\mu = 10^{-5}$}}\label{fig:ex3_lambda}
    \end{subfigure}%
    \begin{subfigure}{0.5\textwidth}
      \centering
      \includegraphics[width = \textwidth]{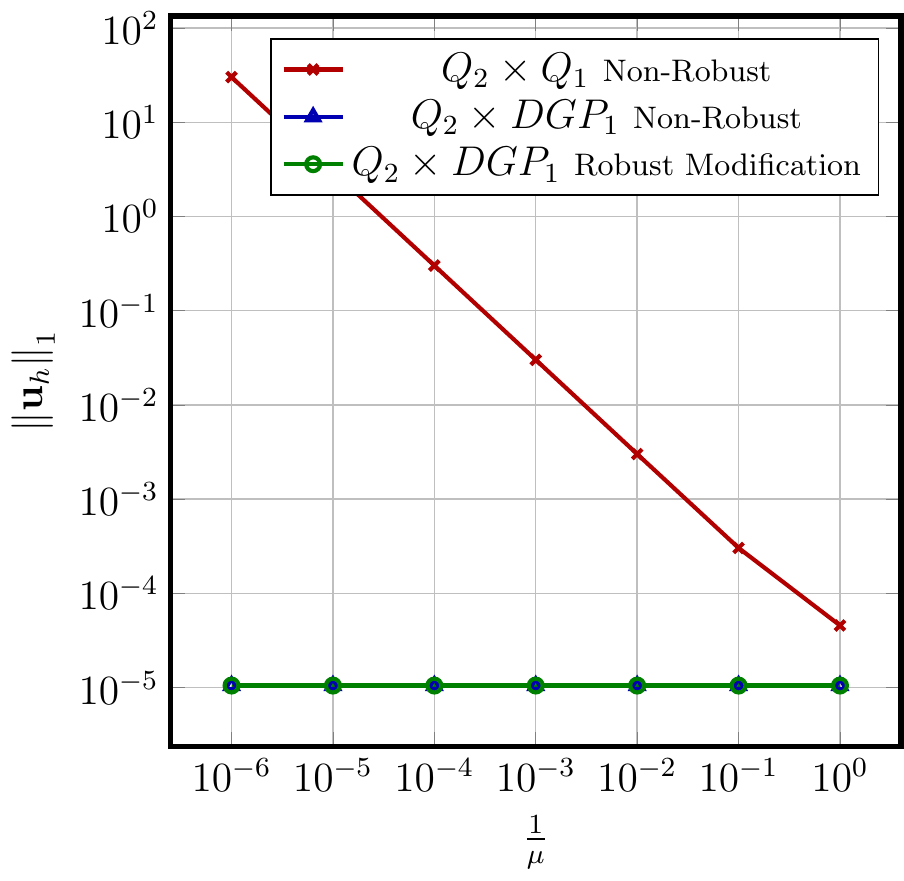}
      \caption{\small{$\|\mathbf{u}_h\|_1$ vs. $\frac{1}{\mu}$ with $\lambda = 10^{-5}$}} \label{fig:ex3_mu}
    \end{subfigure}
  \end{minipage}
  \caption{\small{Comparing displacement error in $H^1$ norm for 
      Example~\ref{ex:ex3} with and without gradient robust
      modification for $h = 2^{-3}$}}\label{fig:ex3}
\end{figure}
It should be noted in this example, that the line for the
non-gradient robust $Q_2\times \operatorname{DGP}_1$ method coincides
with the gradient robust modification. However, this effect is due to
a too simple pressure. That indeed, the standard
$Q_2\times \operatorname{DGP}_1$ method is not gradient robust is
shown in the following example.

\begin{example}\label{ex:ex3b}
  For the third numerical example, we consider the right hand side $f = \nabla \phi; \phi = -10(x-0.5)^3y^2 + (1-x)^3(y-0.5)^3 - 1/8$ 
  in Example~\ref{ex:ex3}.
\end{example} 

Figure~\ref{fig:ex3b} shows our previous statement, that
Example~\ref{ex:ex3} had a pressure which is too simple to show the missing
gradient robustness of the standard $Q_2 \times \operatorname{DGP}_1$
discretization. Indeed, in this example, both $Q_2\times Q_1$ and
$Q_2 \times \operatorname{DGP}_1$ discretization show the undesirable
blowup for $\mu \rightarrow 0$ and the constant value as $\lambda
\rightarrow \infty$, while the gradient robust modification shows the
desired convergence.

\begin{figure}
  \begin{minipage}{\textwidth}
    \begin{subfigure}{.5\textwidth}
      \centering
      \includegraphics[width = \textwidth]{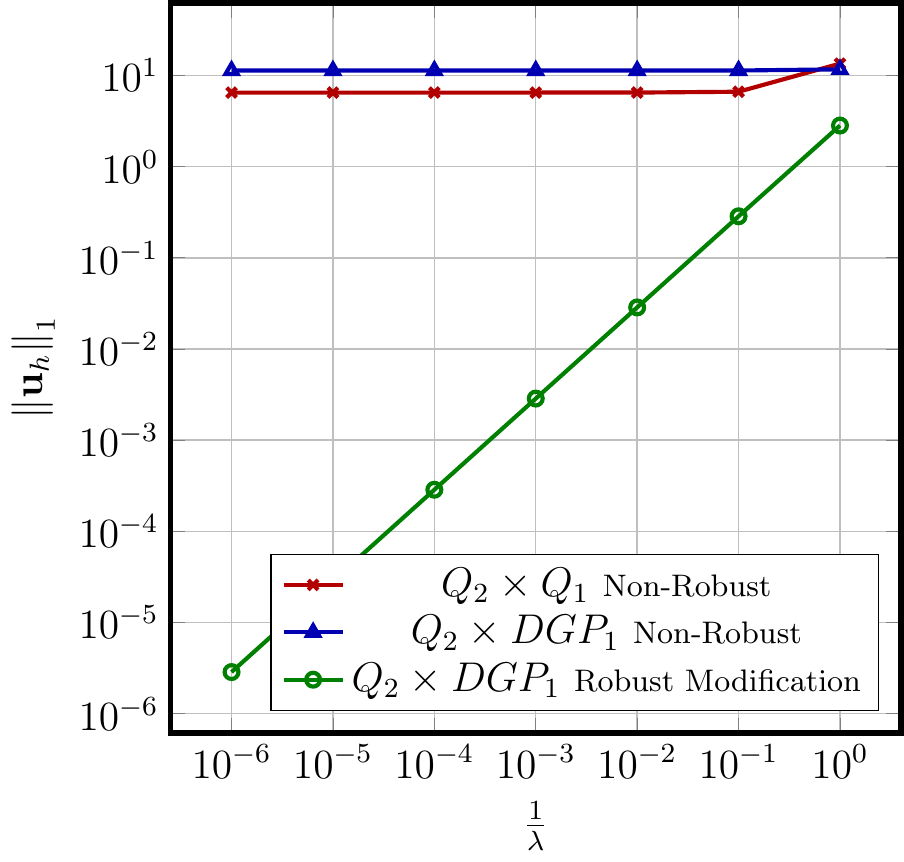}
      \caption{\small{$\|\mathbf{u}_h\|_1$  vs. $\frac{1}{\lambda}$ with $\mu = 10^{-5}$}}\label{fig:ex3b_lambda}
    \end{subfigure}%
    \begin{subfigure}{0.5\textwidth}
      \centering
      \includegraphics[width = \textwidth]{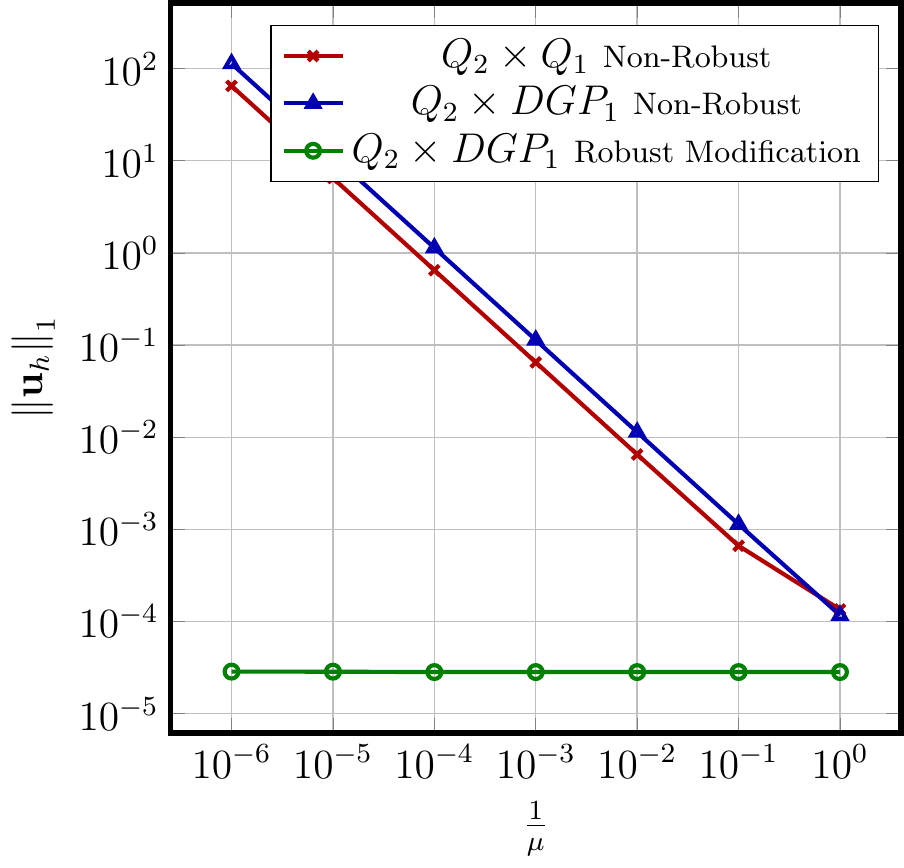}
      \caption{\small{$\|\mathbf{u}_h\|_1$ vs. $\frac{1}{\mu}$ with $\lambda = 10^{-5}$}} \label{fig:ex3b_mu}
    \end{subfigure}
  \end{minipage}
  \caption{\small{Comparing displacement error in $H^1$ norm for 
      Example~\ref{ex:ex3b} with and without gradient robust
      modification for $h = 2^{-3}$}}\label{fig:ex3b}
\end{figure}

\begin{example}\label{ex:ex4}
  For the fourth example, we consider the nearly incompressible case
  $(\lambda \neq \infty)$, i.e., 
  \begin{equation}
    \begin{aligned}
      -2\mu \nabla  \cdot  \varepsilon(\mathbf{u}) + \nabla p = \mathbf{f}, \\
      \nabla \cdot \mathbf{u} - \frac{1}{\lambda}p= 0
    \end{aligned}
  \end{equation}
  where we use the same $f$ as in Example~\ref{ex:ex2}. 
\end{example}

In this example, for $\lambda = \infty$, the solution $\mathbf{u}^{\infty}$
is known, i.e., it is given in~\eqref{eq:ex1-solution}.
We denote the solution, for $\lambda \neq \infty$, as $\mathbf{u}^\lambda$.
We compute the error $\| \mathbf{u}^\infty -
\mathbf{u}_h^\lambda\|$ in our numerical results,
where $\mathbf{u}_h^\lambda$ is the discrete approximated solution for a given value of $\lambda$. 
Since, Theorem~\ref{thm:gradient_robust_error} provides an
estimate, for $\| \mathbf{u}^\lambda - \mathbf{u}^\lambda_h\|$,
only, we use the triangle inequality to get
\begin{align}
  \|\mathbf{u}^\infty - \mathbf{u}^\lambda_h\|_1 &\le \|\mathbf{u}^\infty - \mathbf{u}^\lambda\|_1 + 
  \|\mathbf{u}^\lambda - \mathbf{u}^\lambda_h\|_1, \\
  & \le \|\mathbf{u}^\infty - \mathbf{u}^\lambda\|_1 + 
  c\left(1+\sqrt{\frac{\mu}{\lambda}}\right) h^2\|\mathbf{u}\|_3 + \frac{ch^2}{\lambda}\|p\|_2. \label{eq:ineq}
\end{align}

Figure~\ref{fig:ex4_mu} follows the same pattern as Figure~\ref{fig:ex3b_mu}. However, there is a slight difference
between Figures~\ref{fig:ex4_lambda} and~\ref{fig:ex3b_lambda}, which can be explained by \eqref{eq:ineq}.
When $\lambda \to \infty$, we have 
\[  \|\mathbf{u}^\infty - \mathbf{u}^\lambda\|_1 + ch^2\|\mathbf{u}_h\|_3  \gg 
ch^2 \sqrt{\frac{\mu}{\lambda}}\|\mathbf{u}\|_3 + \frac{ch^2}{\lambda}\|p\|_2. \]
So the estimate on $\|\mathbf{u}^\infty - \mathbf{u}^\lambda_h\|_1$ converges to 
$\|\mathbf{u}^\infty - \mathbf{u}^\lambda\|_1 \ne 0$ and thus saturates at a non-zero value contrary to the convergence in Figure~\ref{fig:ex3b_lambda}.

From Figure~\ref{fig:ex4_1} and~\ref{fig:ex4_2} we can see, that 
$\|\mathbf{u}^\infty - \mathbf{u}^\lambda_h\|_1$ converges to 
the constant ($\|\mathbf{u}^\infty - \mathbf{u}^\lambda\|_1 + ch^2\|\mathbf{u}\|_3$) as $\lambda \to \infty$
and $\|\mathbf{u}^\infty - \mathbf{u}^\lambda_h\|_1 \to 0$ (since, $\|\mathbf{u}^\infty - \mathbf{u}^\lambda\|_1 \to 0$)
as $\lambda \to \infty$ and $ch^2\|\mathbf{u}\|_3 \to 0$ as $h \to 0$.

\begin{figure}
  \begin{minipage}{\textwidth}
    \begin{subfigure}{0.45\textwidth}
      \centering
      \includegraphics[width = \textwidth]{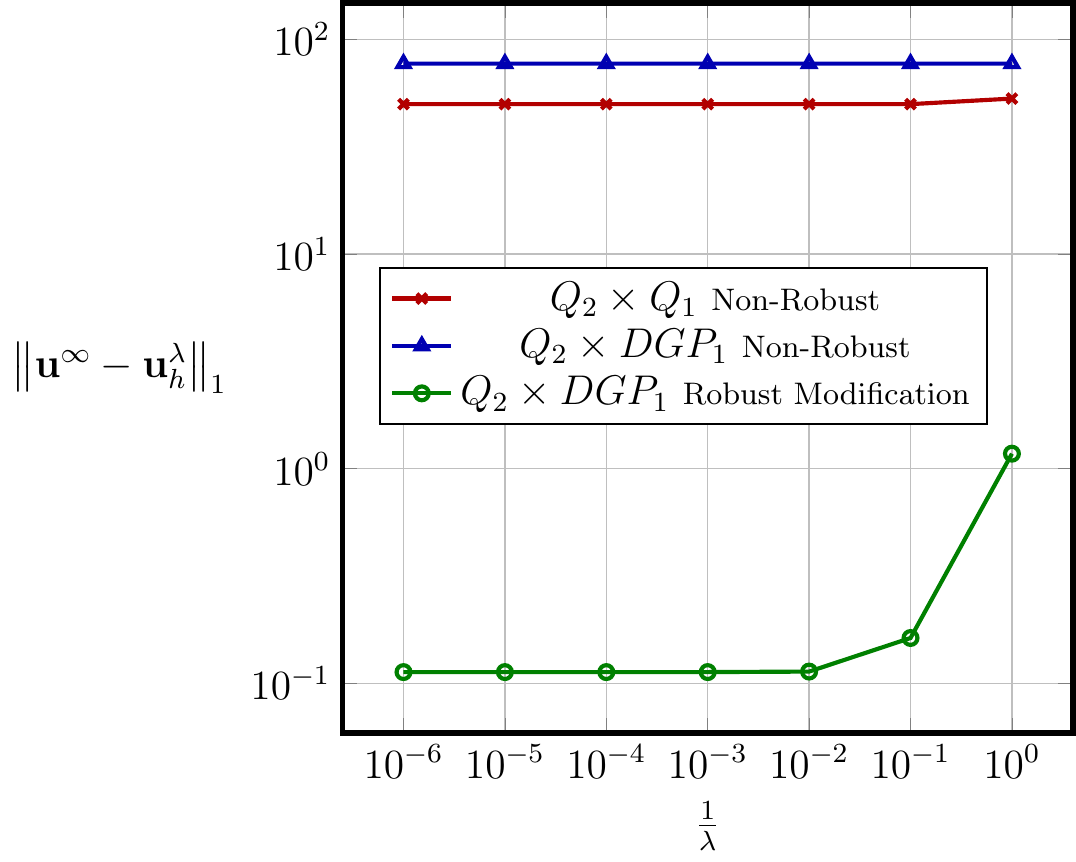}
      \caption{\small{$\|\mathbf{u}^\infty-\mathbf{u}_h^\lambda\|_1$  vs. $\frac{1}{\lambda}$ with $\mu = 10^{-5}$}}\label{fig:ex4_lambda}
    \end{subfigure}%
    \hspace*{\fill}
    \begin{subfigure}{0.45\textwidth}
      \centering
      \includegraphics[width = \textwidth]{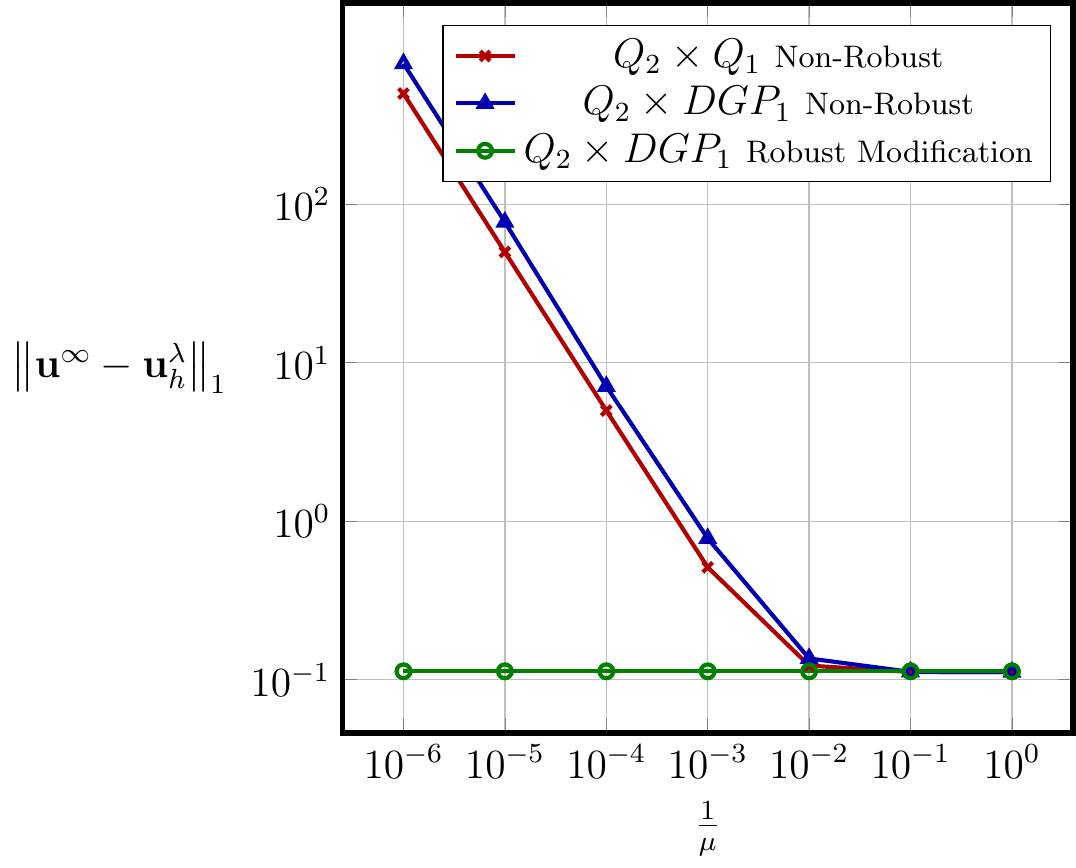}
      \caption{\small{$\|\mathbf{u}^\infty-\mathbf{u}_h^\lambda\|_1$ vs. $\frac{1}{\mu}$ with $\lambda = 10^{-5}$}} \label{fig:ex4_mu}
    \end{subfigure}
  \end{minipage}
  \caption{\small{Comparing displacement error in $H^1$ norm for 
      Example~\ref{ex:ex4} with and without gradient robust
      modification for $h = 2^{-3}$}}\label{fig:ex4}
\end{figure}

\begin{figure}[H]
  \begin{minipage}{\textwidth}
    \begin{subfigure}{0.45\textwidth}
      \centering
      \includegraphics[width = \textwidth]{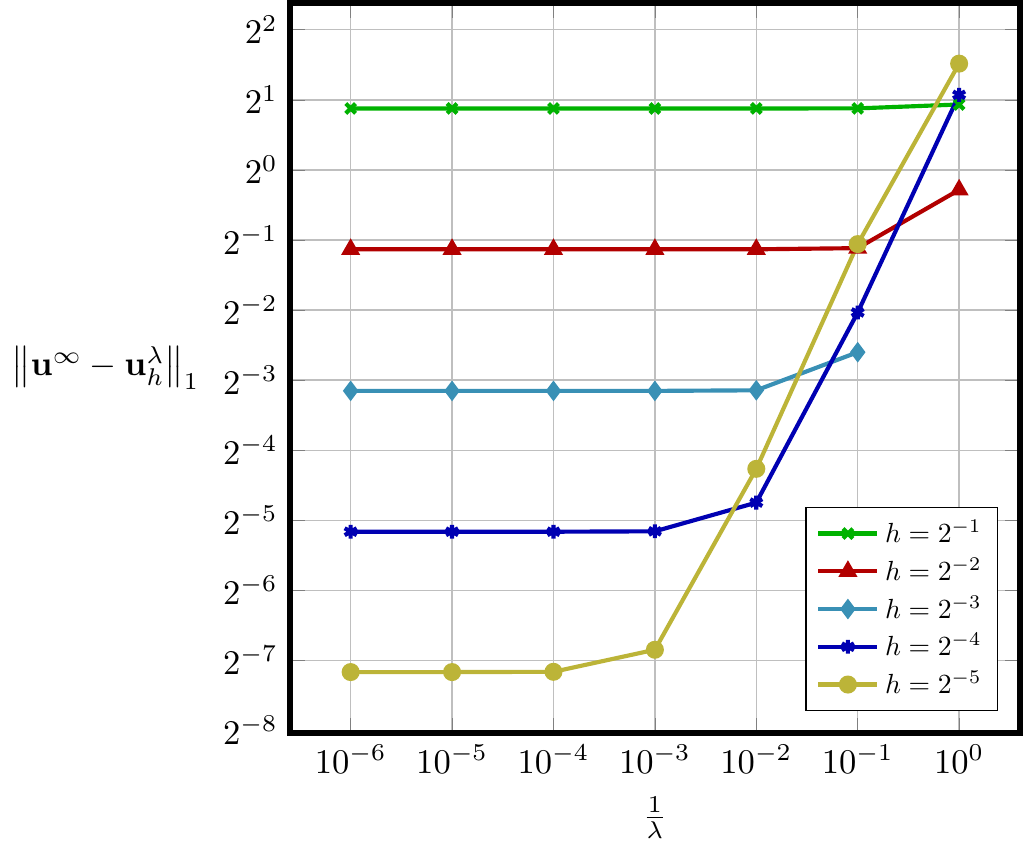}
      \caption{$\|\mathbf{u}^\infty - \mathbf{u}^\lambda_h\|_1$ vs. $\frac{1}{\lambda}$ } \label{fig:ex4_1}
    \end{subfigure}
    \hspace*{\fill}
    \begin{subfigure}{0.45\textwidth}
      \centering
      \includegraphics[width = \textwidth]{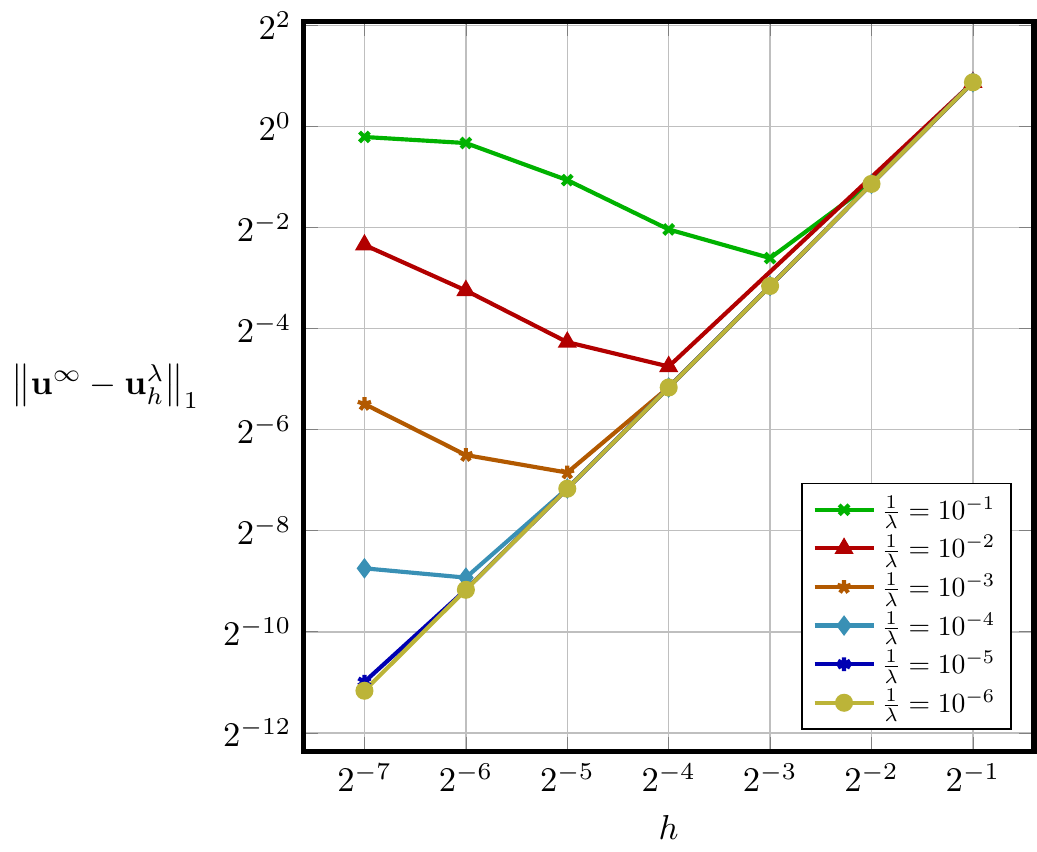}
      \caption{$\|\mathbf{u}^\infty - \mathbf{u}^\lambda_h\|_1$ vs. $h$} \label{fig:ex4_2}
    \end{subfigure}
  \end{minipage}
  \caption{Comparing displacement error in $H^1$ norm for the robust
    modification of Example~\ref{ex:ex4} for $\mu = 10^{-5}$}
  \label{fig:ex4a}
\end{figure}

\begin{example}
  Finally, we would like to compare our results with the thermo-elastic solids example given in~\cite[Section~6]{FuGuoshengLehrenfeldChristophLinkeStrechenbach:2020}.
  The gradient force $\mathbf{f}$ is given by a temperature $\theta$ as  
  \[ \mathbf{f} = -\left(2\mu + 3\lambda\right)\alpha \nabla \theta. \]
  The material used is a nearly incompressible hard rubber with Young's Modulus $E = 5 \times 10^7[\rm{Pa}]$,
  Poisson ratio $\nu = 0.4999$ and the thermal expansion coefficient $\alpha = 8 \times 10^{-5}[\rm{1/K}]$. Hence
  the Lam\'{e} parameters are $\lambda = 8.332 \times 10^{10}[\rm{Pa}]$ and $ \mu = 1.6667 \times 10^7[\rm{Pa}]$.
  We take the domain $\Omega = [0, L]^2$ with $L = 0.1[\rm{m}]$. The temperature field is obtained as the 
  solution to the stationary heat equation:
  \[-\nabla \cdot \gamma \nabla \theta = f,\]
  where $\gamma = 0.2[\rm{W/(m K)}]$ is the thermal conductivity coefficient and $f = 4 \times \rm{exp}(-40r^2)[\rm{W/m^3}]$ 
  is the heat source, with $r^2 = (x-0.5L)^2 + (y-0.5L)^2$. Homogeneous Dirichlet boundary conditions are applied 
  on both temperature and displacement. It is important to note that
  $\theta \in H^1(\Omega)$ and
  thus $f \in L^2(\Omega)$. For numerical computation, we
  additionally solve the temperature equation by a standard
  $H^1$-conforming finite element discretization. Hence, the finite
  element spaces now consist of three components, the first two
  denote the displacement and pressure discretization as before.
  The third element, always $Q_2$, is used to solve the equation 
  for the temperature $\theta$.

  In Figure~\ref{fig:Q2DGP1_Interpol}, we can see that we achieve a
  well represented solution for the displacement with only $64$
  elements using a gradient robust method, and the magnitude is
  already captured with only $16$ elements. In comparison, the non
  gradient robust methods require $256$ and $1024$ elements,
  respectively, to get a solution of similar shape and magnitude, see
  Figures~\ref{fig:Q2Q1_NoInterpol} and~\ref{fig:Q2DGP1_NoInterpol}.
\end{example}

\begin{figure}[H]
  \begin{center}
    \begin{subfigure}{0.32\textwidth}
      \centering
      \includegraphics[width = \textwidth]{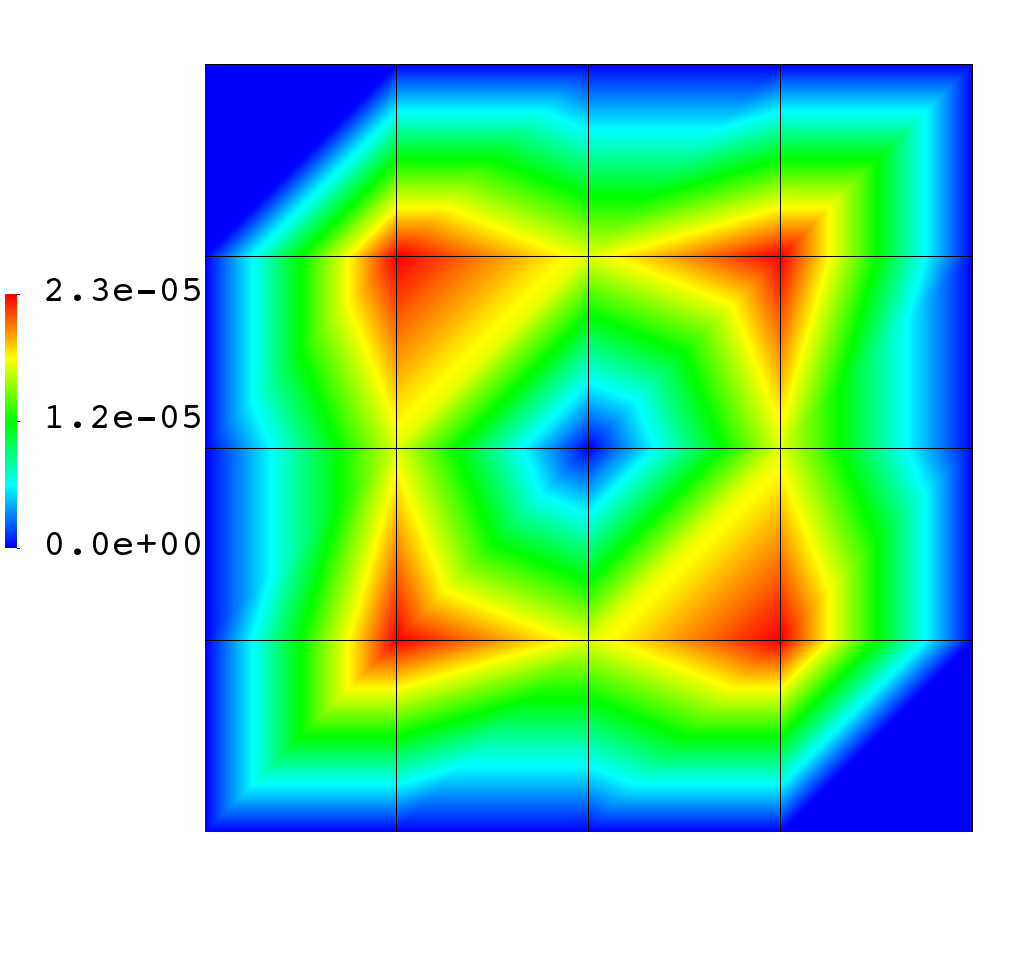}
      \caption{$16$ elements} \label{fig:291Interpol}
    \end{subfigure}
    \begin{subfigure}{0.32\textwidth}
      \centering
      \includegraphics[width=\textwidth]{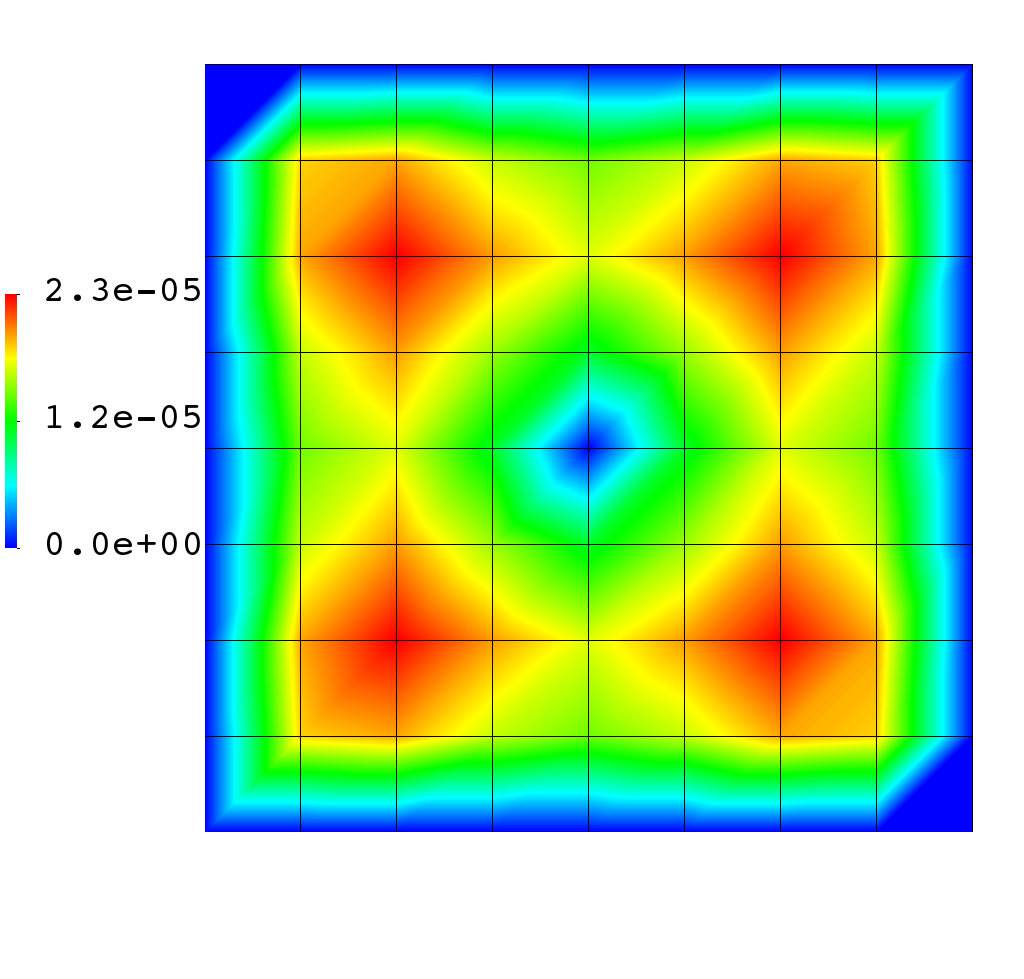}
      \caption{$64$ elements} \label{fig:1059Interpol}
    \end{subfigure}
  \end{center}
  \caption{Displacement vector for different number of elements with $Q_2 \times DGP_1\times Q_2$ 
    with BDM Interpolation} \label{fig:Q2DGP1_Interpol}
\end{figure}

\begin{figure}[H] 
  \begin{minipage}{\textwidth}
    \begin{subfigure}{0.32\textwidth}
      \centering
      \includegraphics[width=\textwidth]{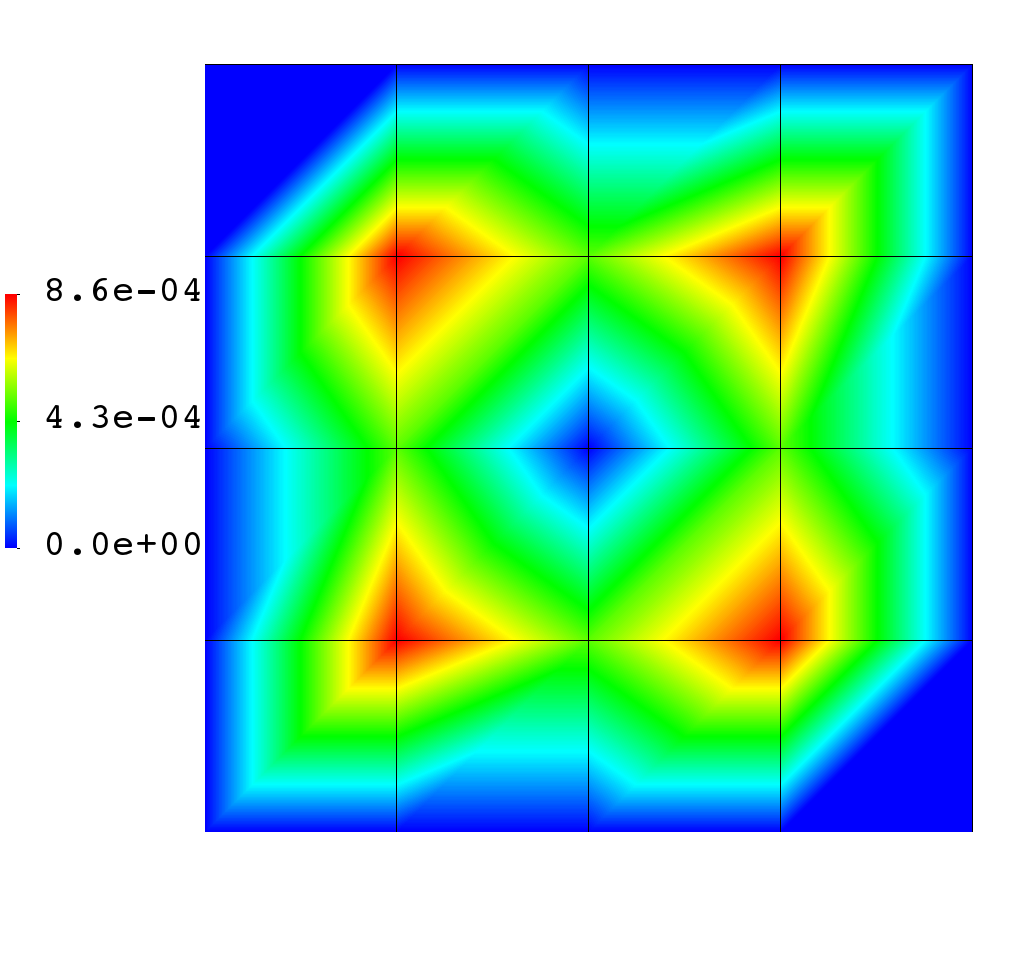}
      \caption{$16$ elements} \label{fig:268}
    \end{subfigure}
    \begin{subfigure}{0.32\textwidth}
      \centering
      \includegraphics[width=\textwidth]{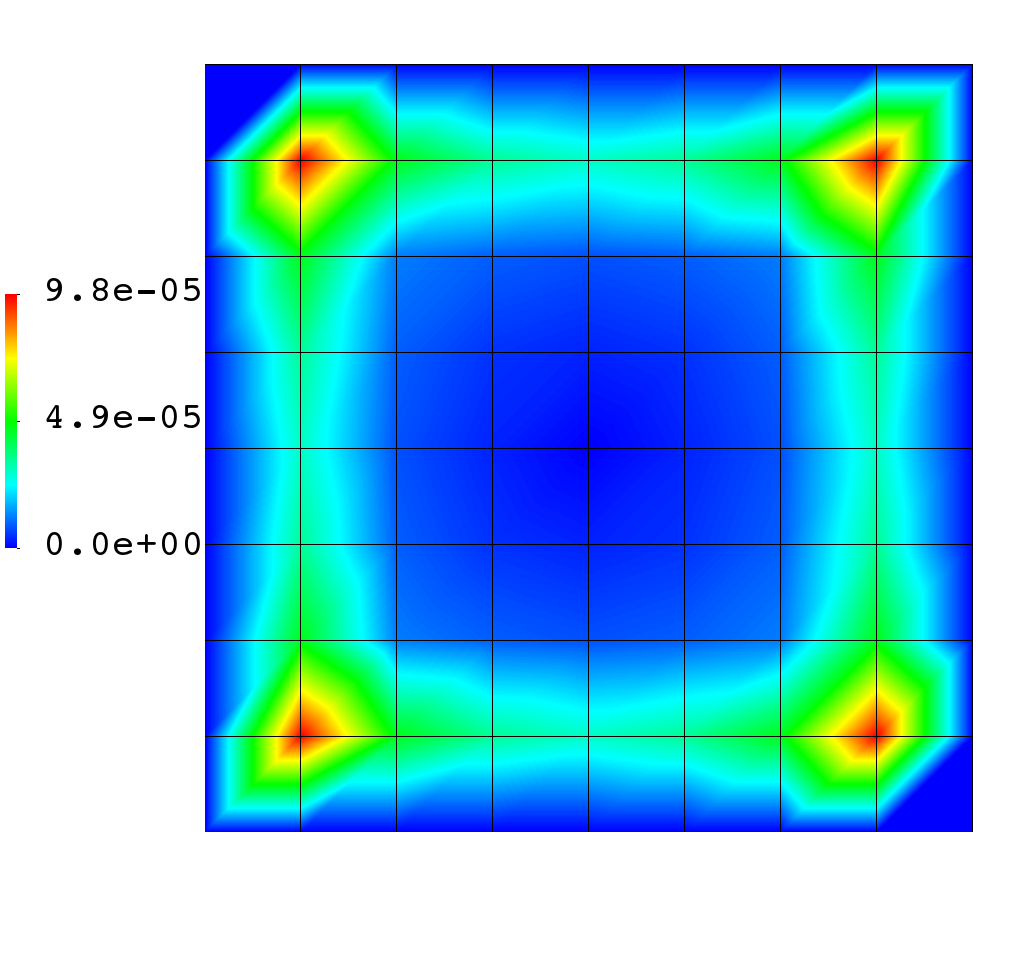}
      \caption{$64$ elements} \label{fig:948}
    \end{subfigure}
    \begin{subfigure}{0.32\textwidth}
      \centering
      \includegraphics[width=\textwidth]{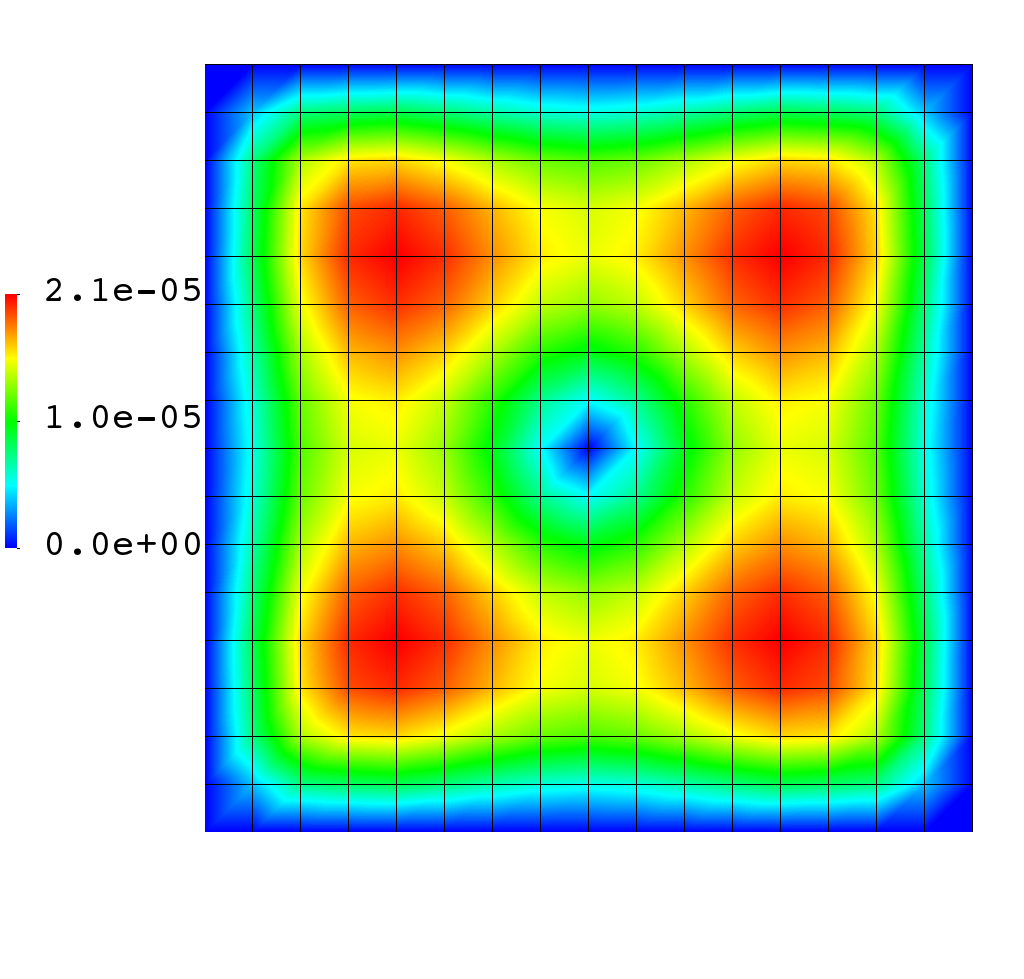}
      \caption{$256$ elements} \label{fig:3556}
    \end{subfigure}
  \end{minipage}
  \caption{Displacement vector for different number of elements with $Q_2 \times Q_1\times Q_2$}
  \label{fig:Q2Q1_NoInterpol}
\end{figure}

\begin{figure}[H]
  \begin{minipage}{\textwidth}
    \begin{subfigure}{0.32\textwidth}
      \centering
      \includegraphics[width=\textwidth]{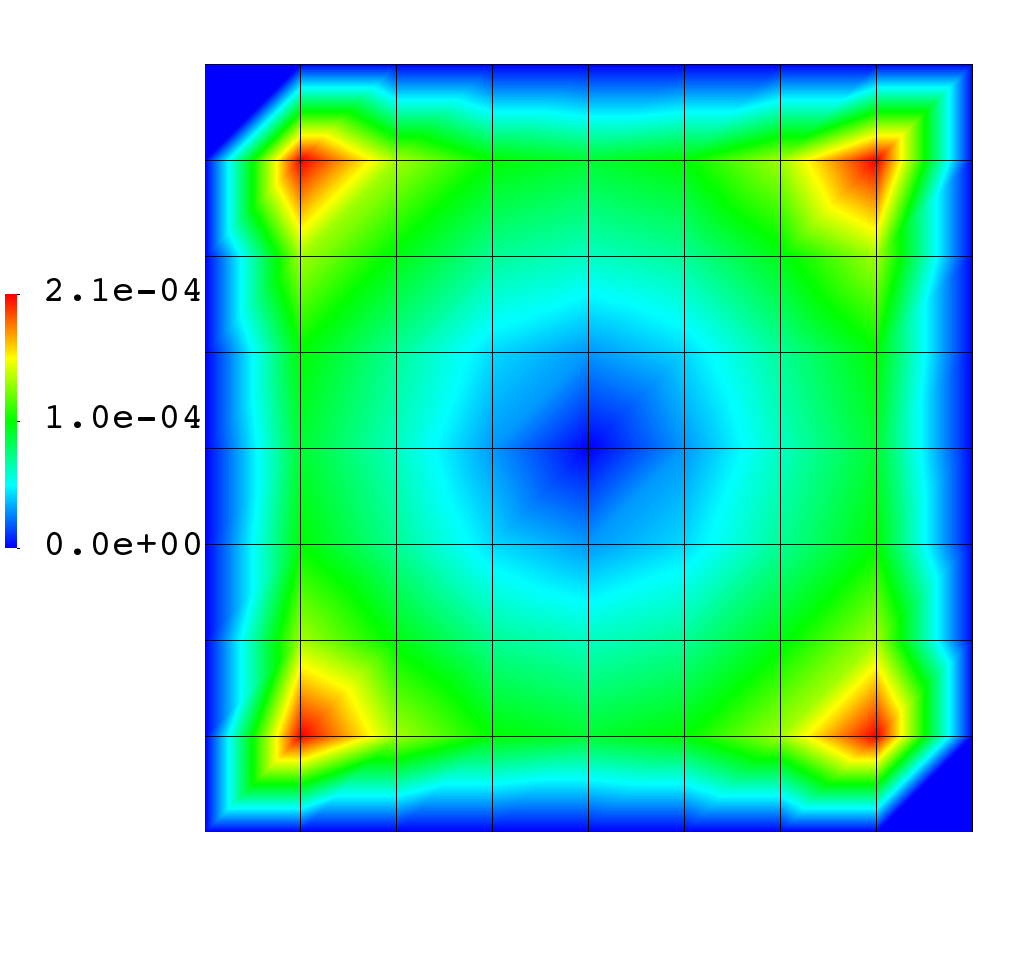}
      \caption{$64$ elements} \label{fig:1059NoInterpol}
    \end{subfigure}
    \begin{subfigure}{0.32\textwidth}
      \centering
      \includegraphics[width=\textwidth]{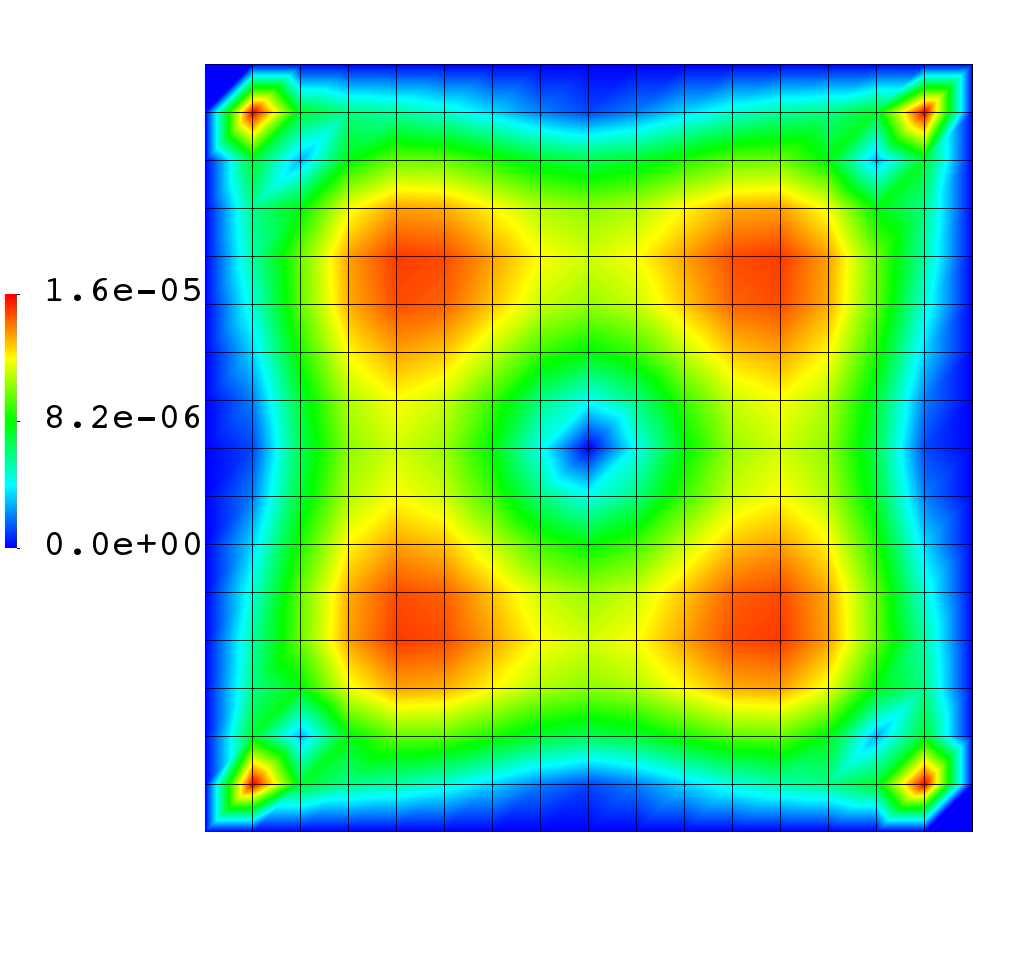}
      \caption{$256$ elements} \label{fig:4035NoInterpol}
    \end{subfigure}
    \begin{subfigure}{0.32\textwidth}
      \centering
      \includegraphics[width=\textwidth]{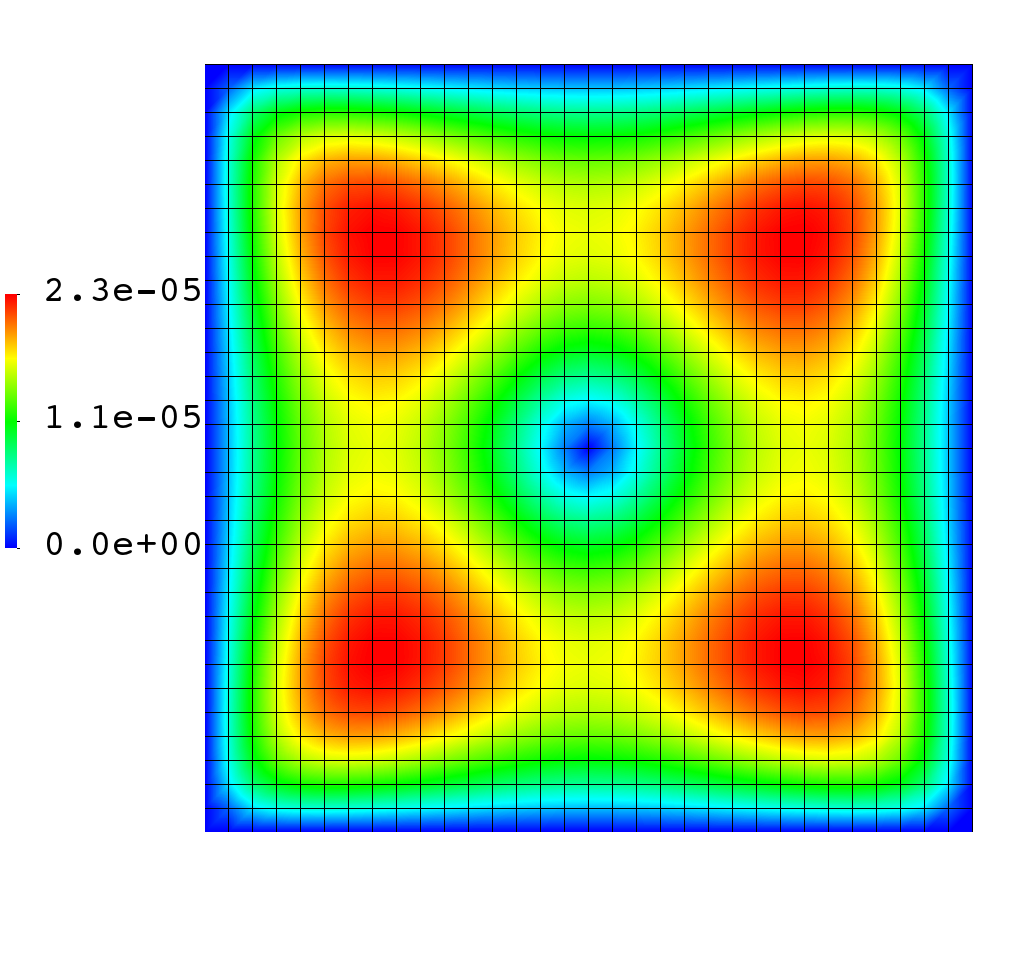}
      \caption{$1024$ elements} \label{fig:15747NoInterpol}
    \end{subfigure}
  \end{minipage}
  \caption{Displacement vector for different number of elements with $Q_2 \times DGP_1\times Q_2$}\label{fig:Q2DGP1_NoInterpol}
\end{figure}

\bibliographystyle{abbrv}
\bibliography{ref}

\end{document}